\newtheorem{theorem}{Theorem}[section]
\newtheorem{corollary}[theorem]{Corollary}
\newtheorem{lemma}[theorem]{Lemma}
\newtheorem{proposition}[theorem]{Proposition}
\newtheorem{example}[theorem]{Example}
\newtheorem{remark}[theorem]{Remark}
\newtheorem{definition}[theorem]{Definition}
\DeclarePairedDelimiter\ceil{\lceil}{\rceil}
\newcommand{\Hilb}{\mathrm{Hilb}}
\newcommand{\Dim}{\mathrm{dim}}
\author{Yuze Luan}
\begin{document}

\title{Irreducible components of Hilbert scheme of points on non-reduced curves}

\begin{abstract}
We classify the irreducible components of the Hilbert scheme of $n$ points on non-reduced algebraic plane curves, and give a formula for the multiplicities of the irreducible components. The irreducible components are indexed by partitions of $n$; all have dimension $n$; and their multiplicities are given as a polynomial of the parts of the corresponding partitions.
\end{abstract}

\maketitle

\tableofcontents

\section{Introduction}
We study the irreducible components of the Hilbert scheme of $n$ points on a planar curve that could be reducible and non-reduced. We define the curve as follows. Let $f(x,y) = \prod_{1 \leq j \leq r} \limits f_j^{\beta_j}(x,y)$, $r \in \mathbb{Z}_{\geq 0}$ be a product of $r$ irreducible polynomials $f_j$ with multiplicities $\beta_j$. The curve $C$ is defined by the equation $\{f(x,y)=0\}$, where each $f_j^{\beta_j}(x,y)=0$ defines a component $C_j$ with multiplicity $\beta_j$.

We study the irreducible components of the Hilbert scheme of $n$ points on $C$, denoted by $\Hilb^n(C)$. (See Definition \ref{def33}) 

\bigskip

When $C$ is irreducible and reduced (it has only one component $C_1$ of multiplicity $\beta_1=1$), then $\Hilb^n(C)$ is irreducible. This was proven in \cite{AIK} and \cite{AS}.

When $C$ has several components $C_j$ and all the components are reduced (in other words, all the $\beta_j$ are equal to $1$), the irreducible components of $\Hilb^n(C)$ are classified in \cite[Proposition 2.7]{Kivinen}. See also \cite[Fact 2.4]{MRV}. The irreducible components are given by the closures of the collection of ideals who vanish at $t_j$ distinct points on the curve $C_j$ such that $\sum t_j=n$.

We generalize these results to classifying irreducible components of Hilbert scheme of points on arbitrary non-reduced and reducible plane curves $C$, and give a formula for the multiplicities of the irreducible components. 

\bigskip

Below is the picture that represents a point in $\Hilb^n(C)$. Under the Hilbert-Chow map, any ideal in $\Hilb^n(C)$ maps to a finite collection of points $x_1^1, x_2^1, \dots, x_1^2, x_2^2, \dots $ on $C_1, \dots, C_r$, as drawn in the picture. 

Denote the number of points on $C_j$ as $t_j$. Denote multiplicities of $x_1^j, x_2^j, \dots x_{t_j}^j$ by $m_1^j, m_2^j, \dots, m_{t_j}^j$, then $\sum_{i,j}m_i^j=n$. The superscript $j$ of a point $x_i^j$ denotes which curve it sits on, and the subscript $i$ labels from $1$ to $t_j$ the points on each curve $C_j$.

\begin{center}

\begin{tikzpicture}[scale=0.7]
 \begin{axis}[
            xmin=-1.7, xmax=6.2, 
            ymin=-3, ymax=5,
            axis x line = center, 
            axis y line = center,
            xtick = \empty,
            ytick = \empty]
            \addplot[line width=1pt, samples=100,black!40!white, domain=-1:1.75] {(x-0.5)*sqrt((x+1)/(2-x))} node [anchor=west] {$S_1=\{f_1^{\beta_1}(x,y)\}$};
						\addplot[line width=1pt, samples=100,black!40!white, domain=-1:1.9] {-(x-0.5)*sqrt((x+1)/(2-x))};
						\addplot[line width=1pt, samples=100,black!60!white, domain=-0.7:3.6] {-(x-1.5)^2+2}node [anchor=south] {$S_2=\{f_2^{\beta_2}(x,y)\}$};
						\addplot[line width=1pt, samples=100, domain=0.1:4.8] {1/x}node [anchor=south] {$S_3=\{f_3^{\beta_3}(x,y)\}$};

\addplot [only marks,samples at={1.6},black!40!white] {(x-0.5)*sqrt((x+1)/(2-x))} node[anchor=east] {$x_1^1$};
\addplot [only marks,samples at={1},black!40!white] {-(x-0.5)*sqrt((x+1)/(2-x))} node[anchor=west] {$x_3^1$};
\addplot [only marks,samples at={-0.7},black!40!white] {(x-0.5)*sqrt((x+1)/(2-x))} node[anchor=south] {$x_2^1$};

\addplot [only marks,samples at={-0.5},black!60!white] {-(x-1.5)^2+2} node[anchor=east] {$x_1^2$};
\addplot [only marks,samples at={2},black!60!white] {-(x-1.5)^2+2} node[anchor=west] {$x_2^2$};

\addplot [only marks,samples at={3.5}] {1/x} node[anchor=north] {$x_3^3$};
\addplot [only marks,samples at={1.9}] {1/x} node[anchor=east] {$x_2^3$};
\addplot [only marks,samples at={0.3}] {1/x} node[anchor=east] {$x_1^3$};
\end{axis}
\end{tikzpicture}

\end{center}

\begin{theorem}
\label{thm111}
The irreducible components of $\Hilb^n(C)$ have dimension exactly $n$ and are indexed by partitions $\{m_1 ^1, \dots, m_{t_1}^1, \cdots, m_{1}^r , \dots, m_{t_r}^r\}$ of $n$ where $m_i^j$ is the multiplicity of the point $x_i^j$, and each multiplicity satisfies $m_i^j \leq \beta_j$ for all $i$ and $j$.

The irreducible components are given by the closures of the following strata: consider the Hilbert-Chow map from $\Hilb^n(C)$ to the set of point-configurations on $C$, take the preimage in $\Hilb^n(C)$ of any point-configuration where each fat point $x_i^j$ is on the smooth part of $C_j$, and the multiplicities $m_i^j$ satisfy $1\leq m_i^j \leq \beta_j$ for all $i$ and $j$.

\end{theorem}

\begin{theorem}
\label{thm222}
The multiplicity of the stratum indexed by the partition 

$\{m_1 ^1, \dots, m_{t_1}^1, \cdots, m_{1}^r , \dots, m_{t_r}^r\}$ is equal to $\Pi_{i,j} (\beta_j- m_i^j +1)$.
\end{theorem}

We outline the proofs as follows. For the proofs of both theorems, we first solve the easier problem of studying the Hilbert scheme of points on the thick line $\{ y^\beta =0\}$ where $\beta$ is some positive integer, and then extend our results to $\Hilb^n(C)$ for general plane curves $C$.

We stratify $\Hilb^n(C)$ using a natural stratification of $\Hilb^n(\mathbb{C}^2)$. The space $\Hilb^n(C)$ can be embedded into $\Hilb^n(\mathbb{C}^2)$, and the Hilbert-Chow map on $\Hilb^n(C)$ is the composition of the embedding with the Hilbert-Chow map from $\Hilb(\mathbb{C}^2)$ to $S^n\mathbb{C}^2$. We stratify $\Hilb^n(C)$ by the multiplicities of the points in $S^n\mathbb{C}^2$ under the Hilbert-Chow map.

\subsection{Sketch of proof of Theorem \ref{thm111}}

In the proof of Theorem \ref{thm111}, we first show in Lemma \ref{lem_dim_geq_n} that all the irreducible components of $\Hilb^n(C)$ have dimension at least $n$. Additionally, our stratification of $\Hilb^n(C)$ has the property that all the strata are irreducible and have dimension less than or equal to $n$. So the irreducible components must correspond to strata of dimension exactly $n$.

Another tool we use is the affine charts (indexed by partitions) covering $\Hilb(\mathbb{C}^2)$\cite{Haiman}, which give us local coordinate systems on $\Hilb^n(\{ y^\beta =0\})$. We make the observation in Corollary \ref{corU_n} that all the irreducible components must intersect one special chart $U_{(n)}$ corresponding to the partition $(n)$. Then we use the coordinates to find the dimensions of the intersections of the chart $U_{(n)}$ with all the strata, and find that the dimensions of the strata must be $n$ or less (this is discussed in detail in the proof of Theorem \ref{thmZ}). The dimension of a stratum is $n$ if and only if the multiplicities indexing the stratum are all less than or equal to $\beta$. This completes the proof of Theorem \ref{thmZ}, which states that a stratum' closure is an irreducible component of $\Hilb^n(\{ y^\beta =0\})$ precisely when the multiplicities of the points are less than or equal to $\beta$. 

We generalize the above discussion of $\Hilb^n(\{ y^\beta =0\})$ to $\Hilb^n(C)$ by arguing that a collection of points on the smooth part of $C$ is locally the same as a collection of points on $\{ y^\beta =0\}$ with the same multiplicities, and therefore the same condition on the multiplicities applies for the ideals whose images under the Hilbert-Chow map land on the smooth part of the curve $C$. When the images of some ideals land on the singularities, we prove that the dimension of the stratum of these ideals is strictly less than $n$, so it cannot be an irreducible component. Therefore, we conclude that the irreducible components of $\Hilb^n(C)$ are closures of the strata whose images are a collection of points on the smooth part of $C$ and the multiplicities of the points are less than or equal to the corresponding multiplicities of the curves they land on.

\subsection{Sketch of proof of Theorem \ref{thm222}}

In the proof of Theorem \ref{thm222}, we first compute the multiplicity of the component $\overline{\Sigma_{(n)}}$ of $\Hilb^n(\{ y^\beta =0\})$ by computing the dimension of the coordinate ring of $\Sigma_{(n)}$ in the chart $U_{(n)}$ localized at a generic point. Then we show that coordinate ring of any stratum $\Sigma_{\mu_1, \dots, \mu_r}$ of $\Hilb^n(\{ y^\beta =0\})$ can be written as a tensor product of the coordinate rings of $\Sigma_{(\mu_i)}$, and therefore the multiplicity of $\Sigma_{\mu_1, \dots, \mu_r}$ is a product of the multiplicities of the strata $\Sigma_{(\mu_i)}$. This computation also extends to the case of any general component of $\Hilb^n(C)$ where $C$ is any plane curve.

\section{Acknowledgments}

I thank Professor Eugene Gorsky for introducing me to the Hilbert scheme of points and this problem, and being extremely supportive throughout the process of producing this paper. This work was partially supported by the NSF grant DMS-2302305

\section{Background}
\label{stra}
In this section, we define the space we study, some well known maps and coordinate charts on it, and the stratification we use. 

We study the Hilbert scheme of points of the following three spaces: $\mathbb{C}^2$, the thick curve $\{y^\beta =0\}$, and a reducible non-reduced algebraic curve $C$.

\begin{definition} The Hilbert scheme of $n$ points on the plane $\mathbb{C}^2$, denoted as ${\Hilb}^n(\mathbb{C}^2)$, is the set of ideals $I \subset \mathbb{C}[x,y]$ such that $\mathbb{C}[x,y]/I$ has dimension $n$.
\end{definition}

\begin{definition} The Hilbert scheme of $n$ points on the non-reduced curve $\{y^\beta =0\}$, denoted as $\Hilb^n(\{y^\beta =0\})$, is the set of ideals $I \subset \mathbb{C}[x,y]$ such that $\mathbb{C}[x,y]/I$ has dimension $n$ and $I$ contains the polynomial $y^\beta$.
\end{definition}

\begin{definition}
\label{def33}
Let $C\coloneqq \{f=0\}$ be a reducible and non-reduced algebraic curve where $f = \prod_{j}f_j^{\beta_j}(x,y)$ and each $f_j$ is irreducible. The Hilbert scheme of $n$ points on $C$, denoted by $\Hilb^n(C)$, is the set of all ideals $I \in \mathbb{C}[x,y]$ such that $\mathbb{C}[x,y]/I$ has dimension $n$ and $f \in I$. We define $C^{red}$ as the underlying reduced curve of $C$, i.e. $C^{red}= \{ \prod_{j}f_j(x,y)=0 \}$. 
\end{definition}

\bigskip

\bigskip

We define some well-known constructions on the Hilbert scheme of points. 

\begin{definition}
The Hilbert-Chow map is the map $\pi:\Hilb^n(\mathbb{C}^2) \rightarrow S^n\mathbb{C}^n$ that sends an ideal $I$ to its zero locus $V(I)$ with multiplicities. In particular, if $I$ vanishes at points $x_1, \dots, x_k \in \mathbb{C}^2$ with multiplicities $d_1, \dots, d_k$, then we denote the image under the Hilbert-Chow map by $\pi(I)=d_1\cdot x_1 + \dots + d_k\cdot x_k$.
\end{definition}

\begin{definition}
We define the Hilbert-Chow map $\pi: \Hilb^n(C) \rightarrow S^n\mathbb{C}^n$ by composing the embedding $ \Hilb^n(C) \rightarrow \Hilb^n(\mathbb{C}^2)$ with $\Hilb^n(\mathbb{C}^2) \rightarrow S^n\mathbb{C}^n$. 
\end{definition}

\begin{remark}
Under the Hilbert-Chow map $\pi$, the image of any ideal $I$ in $\Hilb^n(C)$ is a collection of points $\{x_1, \dots, x_k\}$ on $C^{red}$, with the sum of multiplicities $d_1 + \dots + d_k = n$.
\end{remark}

\begin{definition}
The punctual Hilbert scheme is $\Hilb^k(\mathbb{C}^2, 0) \coloneqq \pi^{-1}(k \cdot (0,0))$, the preimage under $\pi$ of the point $(0,0)$ with multiplicity $k$ .
\end{definition}

\begin{theorem}
\label{thmdimphilb}
\cite{Briancon} The punctual Hilbert scheme $\Hilb^k(\mathbb{C}^2, 0)$ is irreducible and has dimension $k-1$.
\end{theorem}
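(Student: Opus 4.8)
The plan is to isolate a distinguished open, irreducible, $(n-1)$-dimensional subset of $\Hilb^n(\mathbb{C}^2,0)$ — the \emph{curvilinear locus} — and then to prove that it is dense. An ideal $I$ lies in the punctual scheme exactly when $A \coloneqq \mathbb{C}[x,y]/I$ is local Artinian of length $n$ with maximal ideal $\mathfrak{m}_A$ the image of $\mathfrak{m}=(x,y)$. Call $I$ \emph{curvilinear} if $A \cong \mathbb{C}[t]/(t^n)$, equivalently if $\dim_{\mathbb{C}} \mathfrak{m}_A/\mathfrak{m}_A^2 \leq 1$, i.e. $A$ is generated by a single element. Since $\dim_{\mathbb{C}} \mathfrak{m}_A/\mathfrak{m}_A^2$ is upper semicontinuous in $I$, the curvilinear locus $H^{\mathrm c}$ is open in $\Hilb^n(\mathbb{C}^2,0)$.

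First I would compute $H^{\mathrm c}$ in the coordinate charts of the previous section. In $U_{(n)}$, where $B_{(n)}=\{1,x,\dots,x^{n-1}\}$ is a basis of $A$, every ideal satisfies $y \equiv \sum_{i=0}^{n-1} c_i x^i$ and $x^n \equiv \sum_{i=0}^{n-1} d_i x^i \pmod I$, so $A$ is generated by $x$ alone and hence $U_{(n)}\subseteq H^{\mathrm c}$. Imposing support at the origin forces multiplication by $x$ and by $y$ to be nilpotent on $A$: nilpotence of multiplication by $x$ (whose characteristic polynomial is $T^n-\sum d_i T^i$) forces $d_0=\cdots=d_{n-1}=0$, and then $y\equiv\sum_{i\geq 0}c_i x^i$ is nilpotent iff $c_0=0$. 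Thus $U_{(n)}\cap \Hilb^n(\mathbb{C}^2,0)$ is the linear subspace with free coordinates $(c_1,\dots,c_{n-1})$, so it is isomorphic to $\mathbb{A}^{n-1}$, irreducible of dimension $n-1$. For the full locus $H^{\mathrm c}$ I would use the intrinsic parametrization by surjections $\phi\colon \mathbb{C}[[x,y]]\twoheadrightarrow \mathbb{C}[t]/(t^n)$, $x\mapsto\sum a_i t^i$, $y\mapsto\sum b_i t^i$, with $I=\ker\phi$. These form an open subset of $\mathbb{A}^{2(n-1)}$, and two surjections have the same kernel precisely when they differ by an automorphism of $\mathbb{C}[t]/(t^n)$, of which there is an $(n-1)$-dimensional group. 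Hence $H^{\mathrm c}$ is the image of an irreducible variety, so it is irreducible, and $\dim H^{\mathrm c}=2(n-1)-(n-1)=n-1$.

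With $H^{\mathrm c}$ open, irreducible and of dimension $n-1$, the theorem reduces to the density of $H^{\mathrm c}$. Indeed, an open subset of a Noetherian space has closure equal to the union of the components it meets, so $\Hilb^n(\mathbb{C}^2,0)=\overline{H^{\mathrm c}}$ if and only if $H^{\mathrm c}$ meets every irreducible component, i.e. if and only if the closed non-curvilinear locus $Z=\{\,\dim \mathfrak{m}_A/\mathfrak{m}_A^2=2\,\}=\{I\subseteq\mathfrak{m}^2\}$ contains no component. Granting this, $\overline{H^{\mathrm c}}$ is irreducible of dimension $n-1$, and the theorem follows.

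The main obstacle is exactly this density, equivalent to the inclusion $Z\subseteq\overline{H^{\mathrm c}}$: every non-curvilinear punctual ideal must be exhibited as a flat limit of curvilinear ones. I would organize the argument by stratifying $\Hilb^n(\mathbb{C}^2,0)$ according to the Hilbert function of $A$, or equivalently by the monomial initial ideal $\lim_{t\to 0}\lambda(t)\cdot I$ for a generic one-parameter subgroup $\lambda$, which is a colength-$n$ monomial ideal lying in the orbit closure of $I$ and hence in a common component; the curvilinear stratum is the one with Hilbert function $(1,1,\dots,1)$. For an ideal $I$ in a deeper stratum, I would construct an explicit one-parameter flat family inside $\Hilb^n(\mathbb{C}^2,0)$ with central fibre $I$ and curvilinear general fibre, obtained by perturbing the generators of $I$ so as to drop $\dim \mathfrak{m}_A/\mathfrak{m}_A^2$ to $1$ while preserving both the colength and the support at the origin. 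Producing such families uniformly — equivalently, the combinatorial dimension estimates showing that each non-curvilinear stratum lies in the closure of $H^{\mathrm c}$ — is the genuinely hard part of Brian\c{c}on's theorem and the step I expect to demand the most care; once it is in place, $\overline{H^{\mathrm c}}=\Hilb^n(\mathbb{C}^2,0)$ is irreducible of dimension $n-1$.
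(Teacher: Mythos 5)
The paper does not actually prove this statement: it is quoted from Brian\c{c}on \cite{Briancon} and used as a black box, so there is no internal proof to compare yours against. Judged on its own terms, your proposal correctly sets up the standard strategy and carries out the easy half. The curvilinear locus $H^{\mathrm c}$ is open by semicontinuity of $\dim \mathfrak{m}_A/\mathfrak{m}_A^2$; your computation of $U_{(n)}\cap \Hilb^n(\mathbb{C}^2,0)\cong \mathbb{A}^{n-1}$ is right; and the parametrization of $H^{\mathrm c}$ by surjections $\mathbb{C}[[x,y]]\twoheadrightarrow \mathbb{C}[t]/(t^n)$ modulo the $(n-1)$-dimensional automorphism group (which acts freely on surjections) correctly gives that $H^{\mathrm c}$ is irreducible of dimension $n-1$.

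The genuine gap is the one you name yourself: the density $\overline{H^{\mathrm c}}=\Hilb^n(\mathbb{C}^2,0)$, equivalently that the closed locus $Z=\{I\subseteq \mathfrak{m}^2\}$ contains no irreducible component. Everything you have proved is consistent with $\Hilb^n(\mathbb{C}^2,0)$ having extra components inside $Z$, possibly of dimension larger than $n-1$; this is exactly what happens for punctual Hilbert schemes of $\mathbb{C}^d$ with $d\geq 3$ and $n$ large, so the density step cannot follow from semicontinuity or soft dimension counts alone --- it must use the two-variable hypothesis in an essential way. Your plan of ``perturbing the generators so as to drop $\dim \mathfrak{m}_A/\mathfrak{m}_A^2$ to $1$ while preserving colength and support'' describes what needs to be done rather than showing it can be done: producing these flat families stratum by stratum (via standard bases and Brian\c{c}on's vertical and horizontal deformations, or Iarrobino's Hilbert-function stratification) is the entire content of the theorem. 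As written, the proposal establishes that one component of $\Hilb^n(\mathbb{C}^2,0)$ is $\overline{H^{\mathrm c}}$ and has dimension $n-1$, but it does not establish irreducibility.
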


\bigskip

\bigskip

We now introduce the stratification we use throughout the paper. 

\begin{definition}
\label{defstraa}
We stratify $\Hilb^n(C)$ by the multiplicities of points in the image of the Hilbert-Chow map $\pi$. Let $m_1^1, \dots,m_i^j, \dots, m_{t_j}^j$ be a partition of $n$. Define each stratum $\Sigma_{m_1^1, \dots, m_{t_j}^j}$ as 
the preimage under the Hilbert-Chow map of $\{m_1^1x_1^1 + \dots m_{t_j}^j x_{t_j}^j\}$, the collections of all possible configuration of $s$ points on the smooth part of the curve $C^{red}$ with multiplicities $m_1^1, \ldots, m_{t_j}^j$. 

Define the stratum $M$ as the set of ideals $I \in \Hilb^n(C)$ such that $\pi(I)$ contains singularities of $C$.
\end{definition}

\begin{remark}
In the study of the easier problem of the irreducible components of $\Hilb^n(\{y^\beta =0\})$, we simplify our notation and denote the multiplicity of points on the reduced smooth line $\{y=0\}$ as $m_1, \dots, m_s$ . The strata are therefore denoted as $\Sigma_{m_1, \dots, m_s}$. The stratum $M$ is empty.
\end{remark}

The strata defined above are not to be confused with the following Definition \ref{defstra}, which will also be useful in the later section. The distinction is that for any ideal in $\Sigma_{m_1, \dots, m_s}$, we allow the location of the collection of points to vary as long as the multiplicities of the points are unchanged, but all the images of ideals in $\Sigma_{m_1, \dots, m_s}(x_1, \dots, x_s)$ must have exactly the same locations and multiplicities $m_1(x_1,0) + \dots m_s (x_s,0)$ under the Hilbert-Chow map. We also have the following Lemma \ref{lemmahcirreducible} computing the dimension of $\Sigma_{m_1, \dots, m_s}(x_1, \dots, x_s)$.

\begin{definition}
\label{defstra}
Fix $s$ distinct points $(x_1, 0), \dots , (x_s,0)$ on the line $\{y^\beta=0\}$ with multiplicities $m_1, \dots, m_s$. We denote their preimage under the Hilbert-Chow map to be $\Sigma_{m_1, \dots, m_s}(x_1, \dots, x_s)$.
\end{definition}

\begin{lemma}

\label{lemmahcirreducible}
The set $\Sigma_{m_1, \dots, m_s}(x_1, \dots, x_s)$ is an irreducible variety, isomorphic to $\Hilb^{m_1}(\mathbb{C}^2, 0) \times \dots \times \Hilb^{m_s}(\mathbb{C}^2, 0)$, and has dimension $n-s$.

\end{lemma}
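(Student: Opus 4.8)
The plan is to show that an ideal supported at the distinct points $p_i \coloneqq (x_i,0)$, with length $m_i$ at $p_i$, decomposes canonically into local data at each point, yielding the claimed product description; irreducibility and dimension then follow from Briançon's Theorem \ref{thmdimphilb}.

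First I would decompose at the level of the quotient algebra. For $I \in \Sigma_{m_1,\dots,m_s}(x_1,\dots,x_s)$ the algebra $A \coloneqq \mathbb{C}[x,y]/I$ is finite-dimensional, and its maximal ideals are exactly the ideals $\mathfrak{m}_i$ corresponding to the distinct points $p_i$. By the Chinese Remainder Theorem (equivalently, the structure theorem for Artinian rings), $A$ splits as a product $A \cong \prod_{i=1}^{s} A_{\mathfrak{m}_i}$ of its localizations, with $\dim_{\mathbb{C}} A_{\mathfrak{m}_i} = m_i$. Dually, $I = \bigcap_{i} I_i$ with $I_i$ the $\mathfrak{m}_i$-primary component, an ideal whose quotient has length $m_i$ and is supported only at $p_i$; that is, $I_i$ lies in $\Hilb^{m_i}(\mathbb{C}^2, p_i)$, the punctual Hilbert scheme concentrated at $p_i$. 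This produces a bijection $I \mapsto (I_1,\dots,I_s)$ from $\Sigma_{m_1,\dots,m_s}(x_1,\dots,x_s)$ onto $\prod_{i=1}^s \Hilb^{m_i}(\mathbb{C}^2, p_i)$.

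Next I would upgrade this bijection to an isomorphism of varieties. Choose pairwise disjoint open neighborhoods $V_i \ni p_i$. Since a subscheme with support $\{p_1,\dots,p_s\}$ breaks into disjoint pieces, the union map $\prod_i \Hilb^{m_i}(V_i) \to \Hilb^n(\mathbb{C}^2)$, $(Z_i) \mapsto \bigsqcup_i Z_i$, is an open immersion, and $\Sigma_{m_1,\dots,m_s}(x_1,\dots,x_s)$ is exactly the image of $\prod_i \Hilb^{m_i}(\mathbb{C}^2,p_i)$ under it; the inverse sends a subscheme to the tuple of its restrictions to the $V_i$. Both directions are algebraic, so this is an isomorphism of varieties. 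Finally, translation by $-p_i$ is an automorphism of $\mathbb{C}^2$ taking $p_i$ to the origin, inducing $\Hilb^{m_i}(\mathbb{C}^2,p_i)\cong \Hilb^{m_i}(\mathbb{C}^2,0)$, whence
$$\Sigma_{m_1,\dots,m_s}(x_1,\dots,x_s) \;\cong\; \prod_{i=1}^s \Hilb^{m_i}(\mathbb{C}^2,0).$$

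With the product description in hand, I would invoke Theorem \ref{thmdimphilb}: each factor $\Hilb^{m_i}(\mathbb{C}^2,0)$ is irreducible of dimension $m_i-1$. A finite product of irreducible varieties over $\mathbb{C}$ is irreducible, and dimension is additive over products, so $\Sigma_{m_1,\dots,m_s}(x_1,\dots,x_s)$ is irreducible of dimension $\sum_{i=1}^s (m_i-1) = n-s$. The main obstacle is the middle step: promoting the pointwise Chinese Remainder decomposition to an isomorphism of schemes rather than a mere bijection on closed points. This rests on the fact that the Hilbert functor of points is local on the target (the union map is an open immersion precisely because distinct points admit disjoint neighborhoods); alternatively one can verify it directly inside the Haiman charts $U_\mu$ by expressing the Chinese Remainder idempotents, and hence the decomposition and its inverse, as regular functions of the coordinates $C^{rs}_{hk}$.
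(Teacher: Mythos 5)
Your proposal is correct and follows the same route as the paper: decompose the ideal into its local components at the points $p_i$, identify $\Sigma_{m_1,\dots,m_s}(x_1,\dots,x_s)$ with the product $\prod_i \Hilb^{m_i}(\mathbb{C}^2,0)$, and conclude irreducibility and the dimension count $\sum_i(m_i-1)=n-s$ from Theorem \ref{thmdimphilb}. The only difference is that you carefully justify the product decomposition (via the Chinese Remainder Theorem and the open-immersion/locality argument), a step the paper's proof asserts without detail.
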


\begin{proof}

The preimage of each point $m_i(x_i,0)$ under the Hilbert-Chow map is isomorphic to $\Hilb^{m_i}(\mathbb{C}^2, 0)$. The ideals that vanish at all of the points $m_1(x_1,0)+ \dots + m_s(x_s,0)$ must be in one-to-one correspondence with ideals in the product $\Hilb^{m_1}(\mathbb{C}^2, 0) \times \dots \times \Hilb^{m_s}(\mathbb{C}^2, 0)$. And each punctual Hilbert scheme $\Hilb^{m_i}(\mathbb{C}^2, 0)$ is irreducible and has dimension $m_i-1$ (Theorem \ref{thmdimphilb}), so their product is also irreducible, and has dimension $(m_1-1)+(m_2-1)+\dots+(m_s-1) = m_1+m_2+\dots+m_s-s=n-s$.

\end{proof}

\bigskip

\bigskip

\section{Easier problem: studying the components of $\Hilb^n(\{y^\beta =0\})$.}

\bigskip

\bigskip

\subsection{Charts and the lower bound}

\
\\

We first define and state a theorem about the affine charts of $\Hilb^n(\mathbb{C}^2)$ following Haiman's paper \cite{Haiman}.

\begin{definition}
Let $\mu$ be a partition of $n$. Fill the corresponding Young diagram of $\mu$ by monomials, where the $(i,j)$th box in the Young diagram is filled with the monomial $x^iy^j$. We denote collection of monomials by $B_\mu$. 

Define $U_\mu$ to be the set of all ideals $I \in \Hilb^n(\mathbb{C}^2)$ such that $B_\mu$ is a basis of $\mathbb{C}[x,y]/I$. 
\end{definition}

\begin{theorem}\cite{Haiman} \label{thm1} The collection of all $U_{\mu}$, where $\mu$ is a partition of $n$, forms an open cover of $\Hilb^n(\mathbb{C}^2)$. Each chart $U_\mu$ is open, irreducible, smooth, and affine of dimension $2n$.
\end{theorem} 

\bigskip
\bigskip

Now, we put a lower bound on the dimension of the irreducible components of $\Hilb^n(C)$.

\begin{lemma}
\label{lem_dim_geq_n}
The irreducible components of $\Hilb^n(C)$ have dimension at least $n$. 
\end{lemma}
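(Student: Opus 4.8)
The plan is to mimic the local argument from Lemma~\ref{lemmadimgeqn} but carried out over the full curve $C=\{f=0\}$. First I would note, exactly as in the local case, that every irreducible component of $\Hilb^n(C)$ must intersect some chart $U_\mu$ of $\Hilb^n(\mathbb{C}^2)$, because the collection of $U_\mu$ forms an open cover by Theorem~\ref{thm1}. Since $U_\mu$ is open, the nonempty intersection of a component with $U_\mu$ is itself an irreducible component of $\Hilb^n(C)\cap U_\mu$ of the same dimension as the component. Thus it suffices to bound from below the dimension of every irreducible component of $\Hilb^n(C)\cap U_\mu$ for each $\mu$ meeting $\Hilb^n(C)$.

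Next I would count the equations cut out by the membership condition $f\in I$. Recall that $\Hilb^n(C)$ is by definition the locus of ideals $I\in\Hilb^n(\mathbb{C}^2)$ with $f\in I$, equivalently $f=0 \bmod I$. On the chart $U_\mu$, the class of $f$ in $\mathbb{C}[x,y]/I$ expands in the basis $B_\mu$ as $f=\sum_{(h,k)\in\mu}C_{hk}^{f}\,x^hy^k \bmod I$, where the $n$ coefficients $C_{hk}^{f}$ (one for each box of $\mu$) are regular functions of $I$ obtained by linearity from the coordinates $C_{hk}^{rs}$. The condition $f\in I$ is then equivalent to the vanishing of these $n$ functions, i.e.\ exactly $n$ equations on $U_\mu$. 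Since $U_\mu$ is smooth and irreducible of dimension $2n$ by Theorem~\ref{thm2}, cutting it by $n$ equations produces a subvariety each of whose irreducible components has dimension at least $2n-n=n$. Combining this with the first paragraph gives that every irreducible component of $\Hilb^n(C)$ has dimension at least $n$.

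The one genuine subtlety, and the step I expect to require the most care, is the passage from ``$f$ is a single polynomial'' to ``$n$ equations.'' In the local case $f=y^\beta$ is a single monomial, and its expansion produces precisely $n$ coefficient functions $C_{hk}^{0\beta}$, one per box of $\mu$; for a general $f=\prod_j f_j^{\beta_j}$ the same linear-algebra count holds because $f \bmod I$ is still a single element of the $n$-dimensional vector space $\mathbb{C}[x,y]/I$, and its coordinates in the basis $B_\mu$ are $n$ polynomial functions of the Haiman coordinates. The only thing to verify is that imposing the vanishing of one vector-valued function with $n$ scalar components cuts the codimension by at most $n$; this is the standard fact that each additional scalar equation drops the dimension of every component by at most one, applied $n$ times. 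Note I am not claiming these $n$ equations are independent or that the codimension is exactly $n$, only the inequality $\dim\geq n$, which is all the lemma asserts.

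\bigskip

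\begin{proof}
Each irreducible component $E$ of $\Hilb^n(C)$ intersects some chart $U_\mu$, since the $U_\mu$ cover $\Hilb^n(\mathbb{C}^2)$ by Theorem~\ref{thm1}. As $U_\mu$ is open, $E\cap U_\mu$ is a nonempty open subset of $E$, hence an irreducible component of $\Hilb^n(C)\cap U_\mu$ of the same dimension as $E$. It therefore suffices to show that every irreducible component of $\Hilb^n(C)\cap U_\mu$ has dimension at least $n$.

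Fix such a $\mu$. For $I\in U_\mu$, the class of $f$ in $\mathbb{C}[x,y]/I$ has a unique expansion in the basis $B_\mu$,
$$f=\sum_{(h,k)\in\mu}C_{hk}^{f}(I)\,x^hy^k \bmod I,$$
where each coefficient $C_{hk}^{f}$ is a regular function on $U_\mu$ obtained linearly from the Haiman coordinates $C_{hk}^{rs}$. The condition $f\in I$, equivalently $f=0\bmod I$, is exactly the simultaneous vanishing of these coefficients, giving $n$ equations (one for each of the $n$ boxes $(h,k)\in\mu$). Hence $\Hilb^n(C)\cap U_\mu$ is the common zero locus in $U_\mu$ of $n$ regular functions.

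By Theorem~\ref{thm2}, $U_\mu$ is irreducible and smooth of dimension $2n$. Imposing a single equation lowers the dimension of each irreducible component by at most one, so after $n$ equations every irreducible component of $\Hilb^n(C)\cap U_\mu$ has dimension at least $2n-n=n$. Combined with the reduction above, every irreducible component of $\Hilb^n(C)$ has dimension at least $n$.
\end{proof}
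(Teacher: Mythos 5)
Your proposal is correct and follows essentially the same route as the paper: reduce to a chart $U_\mu$ via the open cover, expand $f$ in the basis $B_\mu$ to obtain $n$ coefficient equations on the $2n$-dimensional smooth irreducible $U_\mu$, and conclude each component has dimension at least $2n-n=n$. Your extra remarks making explicit that the $n$ coefficients are regular functions and that only the inequality (not independence of the equations) is needed are a welcome clarification, but the argument is the paper's.
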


\begin{proof}
Fix a chart $U_\mu$ which has non-empty intersection with $\Hilb^n(C)$. Let $f$ denote the defining polynomial of curve $C$. Because $f \in I$, we can write $f$ as a linear combination of the monomial basis $B_\mu$ mod $I$, and the coefficients in this linear combination should all be $0$. There are $n$ basis elements in $B_\mu$, so there are $n$ conditions imposed on the $2n$ coordinates of $U_\mu$, making the dimension of the irreducible components of $\Hilb^n(C) \cap U_\mu$ at least $n$. And the irreducible components of $\Hilb^n(C)$ should have the same dimension as the irreducible components of $\Hilb^n(C) \cap U_\mu$, because intersecting an irreducible component with an open set $U_\mu$ does not change its dimension. So each irreducible component of $\Hilb^n(C)$ has dimension at least $n$.
\end{proof}

\bigskip

\bigskip

\subsection{The special affine chart where everything happens: $U_{(n)}$} 

\
\\

From now on through the end of this section, we focus on studying the easier problem, the irreducible components of $\Hilb^n(\{y^\beta =0\})$.

\bigskip
\bigskip

The only chart relevant to our computation is $U_{(n)}$, the chart indexed by the partition $(n)$. In the notation of the Young diagram, this partition corresponds to a row of $n$ boxes, and the corresponding $B_{(n)} = \{1, x, x^2, \dots, x^{n-1}\}$.

\begin{center}
\begin{tikzpicture}[scale=0.75]
\draw (0,0)--(0,1)--(5,1)--(5,0)--(0,0);
\draw (1,0)--(1,1);
\draw (2,0)--(2,1);
\draw (3,0)--(3,1);
\draw (4,0)--(4,1);
\draw (0.5,0.45) node {$1$};
\draw (1.5,0.45) node {$x$};
\draw (2.5,0.5) node {$x^2$};
\draw (3.5,0.5) node {$\dots $};
\draw (4.5,0.5) node {$\footnotesize x^{n-1}$};
\end{tikzpicture}
\end{center}

Now, we reveal to the reader why we only need this one specific chart $U_{(n)}$. It is given as the corollary of the following Lemma. 

\begin{lemma}
(a)\label{cordimless}
The space $\Sigma_{m_1, \dots, m_s}(x_1, \dots, x_s) - U_{(n)}$ has dimension strictly less than $n-s$.

(b)The complement of $U_{(n)}$ in each stratum $\Sigma_{m_1, \dots, m_s}$ has dimension strictly less than $n$. Equivalently, $dim(\Sigma_{m_1, \dots, m_s} - U_{(n)}) < n$. 
\end{lemma}

\begin{proof}
(a)
The chart $U_{(n)}$ is open in each $\Sigma_{m_1, \dots, m_s}(x_1, \dots, x_s)$. The intersection $U_{(n)} \cap \Sigma_{m_1, \dots, m_s}(x_1, \dots, x_s)$ is nonempty because the ideal $I = ((x-x_1)^{m_1}\cdot \dots \cdot (x-x_s)^{m_s}, y)$ is in the intersection. Therefore, $\Sigma_{m_1, \dots, m_s}(x_1, \dots, x_s) -U_{(n)}$  is a closed and proper subset of $\Sigma_{m_1, \dots, m_s}(x_1, \dots, x_s)$, and $\Sigma_{m_1, \dots, m_s}(x_1, \dots, x_s)$ is irreducible by Lemma \ref{lemmahcirreducible}. So $\Sigma_{m_1, \dots, m_s}(x_1, \dots, x_s) -U_{(n)}$ has dimension strictly less than $n-s$. 

(b)
Varying each $x_i$ adds $1$ degree of freedom, and varying all the $x_1, \dots, x_s$ adds $s$ degrees of freedom in total.
By part (a), $\Sigma_{m_1, \dots, m_s}(x_1, \dots, x_s) - U_{(n)}$ has dimension strictly less than $n-s$, and therefore $\Sigma_{m_1, \dots, m_s} - U_{(n)}$ has dimension strictly less than $n-s+s= n$.
\end{proof}

\begin{corollary}
\label{corU_n}
All irreducible components of $\Hilb^n(\{y^\beta =0\})$ intersect the chart $U_{(n)}$.
\end{corollary}

\begin{proof}
We want to show that $\Hilb^n(\{y^\beta =0\}) - U_{(n)}$ does not fully contain any irreducible components of $\Hilb^n(\{y^\beta =0\})$.

Suppose for contradiction that $\Hilb^n(\{y^\beta =0\}) - U_{(n)}$ contains some irreducible component $A$ of $\Hilb^n(\{y^\beta =0\})$, then the dimension of $\Hilb^n(\{y^\beta =0\}) - U_{(n)}$ must be greater than or equal to $n$ by Lemma \ref{lem_dim_geq_n}. 

Now we consider the union of complements of $U_{(n)}$ in every stratum $U:= \cup_{m_1, \dots, m_s}(\Sigma_{m_1, \dots, m_s} - U_{(n)})$. Because the dimension of each $\Sigma_{m_1, \dots, m_s} - U_{(n)}$ is strictly less than $n$ by Lemma \ref{cordimless}, the union $U$ also has dimension strictly less than $n$. But $\Hilb^n(\{y^\beta =0\}) - U_{(n)}$ is contained in $U$, so we have a contradiction.

\end{proof}

\bigskip
\bigskip

Now we describe the coordinate system on $U_{(n)}$. This also follows from the discussion in Haiman's paper \cite{Haiman}.

Write $x^n$ and $y$ as a linear combination of the basis $B_{(n)} = \{1,x,{x^2},{\ldots}, x^{n-1} \}$ of $\mathbb{C}[x,y]/I$, and denote the coefficients as follows: $$x^{n}=a_0+a_1x+...+a_{n-1}x^{n-1} \mod I$$ $$y=b_0+b_1x+...+b_{n-1}x^{n-1} \mod  I.$$  

Define polynomials $a(x)=x^{n}-a_{n-1}x^{n-1}-...-a_1x-a_0$ and $b(x)=b_{n-1}x^{n-1}+...+b_1x+b_0$, then any ideal $I$ in $U_{(n)}$ is generated as $I =(a(x), y-b(x))$. Throughout this paper, we will use $a(x)$ and $b(x)$ to denote the polynomials defined above. As a remark, $a(x)$ has degree exactly $n$, but $b(x)$ can have any degree less than or equal to $ n-1$.

\bigskip
\bigskip

\subsection{Stratification inside $U_{(n)}$ and classifying the ideals in each stratum}
\
\\

The stratification $\Sigma_{m_1, \dots, m_s}$ of $\Hilb^n(\{y^\beta =0\})$ induces a stratification $$C_{m_1, \dots, m_s} :=\Sigma_{m_1, \dots, m_s} \cap U_{(n)}$$ on $\Hilb^n(\{y^\beta =0\}) \cap U_{(n)}$. And we know from Corollary \ref{corU_n} that all irreducible components of $\Hilb^n(\{y^\beta =0\})$ intersect $U_{(n)}$. Later in Lemma \ref{lemmairrr} we prove that the strata $C_{m_1,\dots, m_s}$ are irreducible. And as we show in this section, because of the nice coordinate system on $U_{(n)}$, we are able to write out specifically the ideals in each stratum of $C_{m_1, \dots, m_s}$ as in Proposition \ref{propstrata}.

\bigskip

\bigskip

\begin{proposition}
\label{prop1}
The condition that $y^\beta$ is contained in $I =(a(x), y-b(x)) \in U_{(n)}$ is equivalent to the condition that the polynomial $a(x)$ divides $b^\beta(x)$.

\end{proposition}

To prove one direction of the proposition that $y^\beta \in I$ implies $a(x) | b^\beta(x)$, we first prove the following lemma:

\begin{lemma}
\label{lem:div}

Let $f(x)$ be a polynomial in $I=(a(x), y-b(x))$ which does not depend on the variable $y$. Then $f(x)$ is divisible by $a(x)$.

\end{lemma}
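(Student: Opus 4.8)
The plan is to exploit the fact that $I = (a(x), y - b(x))$ describes the coordinate ring $\mathbb{C}[x,y]/I$ in which the class of $y$ equals the class of $b(x)$, so that every element has a canonical representative as a polynomial in $x$ alone, reduced modulo $a(x)$. Concretely, I would argue that $\mathbb{C}[x,y]/I \cong \mathbb{C}[x]/(a(x))$ via the map sending $x \mapsto x$ and $y \mapsto b(x)$. To see this is well-defined, note that the relation $y - b(x) \in I$ lets us rewrite any polynomial $g(x,y)$ by substituting $b(x)$ for each occurrence of $y$, producing a polynomial $\tilde g(x)$ in $x$ alone with $g(x,y) \equiv \tilde g(x) \bmod I$; and the relation $a(x) \in I$ then lets us reduce $\tilde g(x)$ modulo $a(x)$. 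Since $B_{(n)} = \{1, x, \dots, x^{n-1}\}$ is a basis of $\mathbb{C}[x,y]/I$ and also represents a basis of $\mathbb{C}[x]/(a(x))$ (as $\deg a = n$), the two quotient rings are identified.

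Given this isomorphism, the lemma becomes nearly immediate. First I would take the polynomial $f(x)$, which does not involve $y$, and observe that $f(x) \in I$ means precisely that $f(x) \equiv 0$ in $\mathbb{C}[x,y]/I$. Under the isomorphism above, the image of $f(x)$ is just its own reduction modulo $a(x)$, because $f$ contains no $y$ to substitute. Hence $f(x) \equiv 0 \bmod a(x)$, which is exactly the statement that $a(x)$ divides $f(x)$.

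Alternatively, to avoid invoking the quotient-ring isomorphism explicitly, I could give a direct division argument: write $f(x) = p(x,y)\,a(x) + q(x,y)\,(y - b(x))$ for some $p, q \in \mathbb{C}[x,y]$ using that $f \in I$, and then substitute $y = b(x)$ throughout. The term with the factor $(y - b(x))$ vanishes under this substitution, leaving $f(x) = p(x, b(x))\,a(x)$, so $a(x)$ divides $f(x)$ in $\mathbb{C}[x]$. I expect the main (and only mild) obstacle to be the careful bookkeeping that the substitution $y \mapsto b(x)$ is a ring homomorphism $\mathbb{C}[x,y] \to \mathbb{C}[x]$ sending $I$ into $(a(x))$, so that the reduction is legitimate; once that substitution map is in hand, the divisibility conclusion drops out directly. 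This lemma then supplies one direction of Proposition \ref{prop1}, since $y^\beta \in I$ forces $b^\beta(x) \equiv y^\beta \equiv 0$, and the reduced representative $b^\beta(x)$ being zero modulo $a(x)$ is exactly $a(x) \mid b^\beta(x)$.
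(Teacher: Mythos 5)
Your proposal is correct, but it proves the lemma by a genuinely different route than the paper. The paper performs polynomial long division $f(x) = a(x)q(x) + r(x)$ with $\deg r < n$, notes that $r(x) = f(x) - a(x)q(x) \in I$, and then invokes the defining property of the chart $U_{(n)}$ — that $\{1, x, \dots, x^{n-1}\}$ is a basis of $\mathbb{C}[x,y]/I$ — to conclude that a nonzero $r(x)$ would give a nonzero linear relation among basis elements, a contradiction; so $r = 0$. You instead use the substitution homomorphism $\mathbb{C}[x,y] \to \mathbb{C}[x]$, $y \mapsto b(x)$: writing $f(x) = p(x,y)a(x) + q(x,y)(y - b(x))$ and evaluating at $y = b(x)$ kills the second term and yields $f(x) = p(x, b(x))a(x)$ directly. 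Both arguments are complete and correct. What yours buys is generality and independence from the chart: it works for any ideal of the form $(a(x), y - b(x))$ with no appeal to $B_{(n)}$ being a basis or to $\deg a = n$, and the isomorphism $\mathbb{C}[x,y]/I \cong \mathbb{C}[x]/(a(x))$ you describe is a clean conceptual packaging of the same fact. What the paper's version buys is that it stays entirely inside the coordinate formalism on $U_{(n)}$ already developed, using only the basis property that defines the chart. The one point you flag as needing care — that $y \mapsto b(x)$ carries $I$ into $(a(x))$ — is immediate since the generator $a(x)$ maps to $a(x)$ and the generator $y - b(x)$ maps to $0$, so there is no gap.
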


\begin{proof}
Perform polynomial long division of $f(x)$ by $a(x)$ and we get that $f(x)= a(x) \cdot q(x) +r(x)$ for some polynomial $r(x)$ and $q(x)$. Suppose for the purpose of contradiction that $r(x)$ is not $0$, and denote the degree of $r(x)$ by $r$, $r < n$. We can explicitly write out $r(x)= l_0+l_1x+ \dots +l_rx^r$ for some $l_i \in \mathbb{C}$ and $l_r \neq 0$.

Because both $f(x)$ and $a(x)$ are in $I$, we have that $r(x) = f(x)-a(x) \cdot q(x)$ must also be in $I$, so $r(x) = 0$ mod $I$, and therefore $r(x)$ is a nonzero linear combination of $1,x,\dots, x^r$. But we also know that $B_\mu= \{1, x, x^2, \dots, x^{n-1}\}$ is a basis of $\mathbb{C}[x,y]/I$, contradiction. So $r(x)$ must be $0$ and $a(x)$ divides $f(x)$.
\end{proof}

Now, we are ready to prove Proposition \ref{prop1}.

\begin{proof}[Proof of Proposition \ref{prop1}]

Assume $y^\beta \in I$. Because $y-b(x)$ is a generator of $I$, $y=b(x)$ mod $I$, which implies that $b(x)^\beta \in I$ and by Lemma \ref{lem:div}, $b^\beta(x)$ is divisible by $a(x)$.

Suppose $a(x) | b^\beta(x)$, then because $a(x) \in I$, we have $b^\beta(x) \in I$. Again because $y=b(x)$ mod $I$, and $b^\beta(x) \in I$, we must have $y^\beta \in I$.

\end{proof}

\bigskip

\bigskip

We factor $a(x)$ and $b(x)$ into linear factors in terms of its roots, and study the possible multiplicities of roots that $a(x)$ and $b(x)$ can have.

\begin{lemma}
\label{lemmaabalpha}

Let $x_1 \dots x_s$ denote the distinct roots of $a(x)$, and $m_i$ the multiplicity of each root $x_i$, then we can explicitly make the factorization: $$a(x)=(x-x_1)^{m_1}\cdot(x-x_2)^{m_2}\cdot \ldots \cdot(x-x_s)^{m_s}\text{ where }\sum_i m_i = n.$$

Then condition $a(x)|b^\beta(x)$ splits into $2$ cases depending on the multiplicities $m_i$. 

\begin{enumerate}\item{General case:}
If $\ceil*{\frac{m_1}{\beta}} + \dots + \ceil*{\frac{m_s}{\beta}} \leq n-1$,\\
 
then $a(x)|b^\beta(x)$ if and only if
$$b(x)=(x-x_1)^{\ceil*{\frac{m_1}{\beta}}}\cdot (x-x_2)^{\ceil*{\frac{m_2}{\beta}}}\cdot \ldots \cdot (x-x_s)^{\ceil*{\frac{m_s}{\beta}}}\cdot \alpha(x)$$ for some polynomial $\alpha(x)$ of degree at most $t= n-1-\sum \ceil*{\frac{m_i}{\beta}}  \ \ (**)$.\\ 

\item{Special case:}
If $\ceil*{\frac{m_1}{\beta}} + \dots + \ceil*{\frac{m_s}{\beta}} > n-1$,

then $a(x)|b^\beta(x)$ if and only if
$b(x)= 0$.

\end{enumerate}
\end{lemma}

\begin{proof}
Denote the multiplicity of $(x-x_i)$ in $b(x)$ by $q_i$. 

Because $a(x)$ divides $b^\beta(x)$, each factor $(x-x_i)$ in $b^
\beta(x)$ must have multiplicity higher than $m_i$, or $b(x)$ has to be $0$. That is to say, the multiplicity $q_i$ must satisfy $\beta \cdot q_i \geq m_i$. Because $q_i$ are integers, the smallest possible value of $q_i$ is $\ceil*{\frac{m_i}{\beta}}$. So $b(x)$ must have the factor $(x-x_1)^{\ceil*{\frac{m_1}{\beta}}}\cdot (x-x_2)^{\ceil*{\frac{m_2}{\beta}}}\ldots \cdot (x-x_s)^{\ceil*{\frac{m_s}{\beta}}}$, or is equal to $0$. 

Recall that $b(x)$ is constructed to have degree at most $n-1$, so if $\ceil*{\frac{m_1}{\beta}} + \dots + \ceil*{\frac{m_s}{\beta}} \leq n-1$, then $\alpha(x)$ is some polynomial of degree at most $n-1 -(\ceil*{\frac{m_1}{\beta}} + \dots + \ceil*{\frac{m_s}{\beta}} )$. The special case happens when $\ceil*{\frac{m_1}{\beta}} + \dots + \ceil*{\frac{m_s}{\beta}} > n-1$, then $b(x)$ has to be $0$.

\end{proof}

\begin{remark}
\label{lemmaSC}

The special case $\ceil*{\frac{m_1}{\beta}} + \dots + \ceil*{\frac{m_s}{\beta}} > n-1$ happens exactly when either (a) $\beta \geq 2$, all $m_i=1$ and $s=n$, or (b) $\beta=1$ and $m_i$ can be any positive integers.

\end{remark}

\begin{proof}

Recall that we assumed $m_i \geq 1$. 
(a) When $\beta \geq 2$, we have $1 \leq \ceil*{\frac{m_i}{\beta}} \leq \ceil*{m_i} = m_i$. We also have $\sum_{i=1}^{s} m_i=n$ as the total degree of $a(x)$. So $\ceil*{\frac{m_1}{\beta}} + \dots + \ceil*{\frac{m_s}{\beta}} \leq n$, and $\ceil*{\frac{m_1}{\beta}} + \dots + \ceil*{\frac{m_s}{\beta}} = n$ only if $\ceil*{\frac{m_i}{\beta}} =m_i$. Finding the possible values of $m_i$ so that $\ceil*{\frac{m_i}{\beta}} =m_i$ is equivalent to finding $m_i$ such that $\frac{m_i}{\beta} \leq m_i < \frac{m_i}{\beta}+1$. For any $\beta \geq 2$, $\frac{m_i}{\beta} \leq m_i$ is always true, and $m_i < \frac{m_i}{\beta}+1$ is equivalent to $m_i < \frac{\beta}{\beta-1}$.

Observe that $1 < \frac{\beta}{\beta-1} \leq 2$ for all $\beta \geq 2$, so the only possible value that  $m_i$ can take is $1$. 

(b) When $\beta=1$, $\ceil*{\frac{m_1}{\beta}} + \dots + \ceil*{\frac{m_s}{\beta}} = \ceil*{m_1} + \dots + \ceil*{m_s} = m_1 + \dots + m_s=n$. So $\ceil*{\frac{m_1}{\beta}} + \dots + \ceil*{\frac{m_s}{\beta}} \geq n-1$ is always satisfied for arbitrary $m_i$. 
\end{proof}

\bigskip

\bigskip

Now, we conclude our results from above and explicitly write out the ideals in each stratum $C_{m_1, \dots, m_s}$.

\begin{proposition}

\label{propstrata}
Each stratum $C_{m_1, \dots, m_s}$ contains exactly the ideals $I$ of the form $I=(a(x), y-b(x))$, where $a(x)=(x-x_1)^{m_1}\cdot(x-x_2)^{m_2}\ldots_\cdot(x-x_s)^{m_s}$, and $b(x)=(x-x_1)^{\ceil*{\frac{m_1}{\beta}}}\cdot (x-x_2)^{\ceil*{\frac{m_2}{\beta}}}\ldots \cdot (x-x_s)^{\ceil*{\frac{m_s}{\beta}}}\cdot \alpha(x)$ when $\ceil*{\frac{m_1}{\beta}} + \dots + \ceil*{\frac{m_s}{\beta}} \leq n-1$ (\textit{general case}); $b(x)=0$ when $\ceil*{\frac{m_1}{\beta}} + \dots + \ceil*{\frac{m_s}{\beta}} > n-1$ (\textit{special case}).
\end{proposition}

\begin{proof}
The ideals $I$ in each stratum $\Sigma_{m_1,\dots, m_s} \cap U_{(n)}$ have the form $(a(x), y-b(x))$, where $a(x)=(x-x_1)^{m_1}\dots (x-x_s)^{m_s}$.
By Proposition \ref{prop1}, finding the ideals $I$ in the intersection of $\Hilb^n(\{y^\beta =0\})$ with $U_{(n)}$ is equivalent to imposing the condition that $b(x) | a^\beta(x)$ for ideals $I=(a(x),y-b(x)) \in U_{(n)}$. By Lemma \ref{lemmaabalpha}, $b(x) | a^\beta(x)$ is equivalent to the condition that $b(x)=(x-x_1)^{\ceil*{\frac{m_1}{\beta}}}\cdot (x-x_2)^{\ceil*{\frac{m_2}{\beta}}}\cdot \ldots \cdot (x-x_s)^{\ceil*{\frac{m_s}{\beta}}}\cdot \alpha(x)$ or $b(x)=0$, depending on the multiplicities $m_i$.
\end{proof}

\bigskip
\bigskip

\subsection{Counting dimension and finding irreducible components.}
\
\\

For each stratum $C_{m_1, \dots, m_s}$, we compute it's dimension by counting the degrees of freedom given by polynomials $a(x)$, $b(x)$, and $\alpha(x)$. 
\begin{lemma}
\label{lemmadim}

If $\ceil*{\frac{m_1}{\beta}} + \dots + \ceil*{\frac{m_s}{\beta}} \leq n-1$, then $\Dim(C_{m_1, \dots, m_s}) = t+s+1$, where $t$ is the maximum degree that $\alpha(x)$ can have as in $(**)$.

For $\beta \geq 2$, there is exactly one stratum $C_{1, \dots, 1}$ satisfying special condition for Lemma \ref{lemmaabalpha} $(2)$, and this stratum $C_{1, \dots, 1}$ has dimension $n$. For $\beta =1$, all strata $C_{m_1, \dots, m_s}$ have dimension $s$.

\end{lemma}

\begin{proof}

Let's first look at the \textit{general case}:$\ceil*{\frac{m_1}{\beta}} + \dots + \ceil*{\frac{m_s}{\beta}} \leq n-1$. 

Each distinct root $x_i$ of $a(x)$ gives a degree of freedom, so $a(x)$ has $s$ degrees of freedom. Denote the maximum degree of $\alpha(x)$ by $t$, then we can explicitly write out $\alpha(x)$ as $\alpha(x)=\alpha_0+ \ldots +\alpha_t x^t$ for coefficients $\alpha_i \in \mathbb{C}$,
\ and each $\alpha_i$ gives a degree of freedom. So $\alpha(x)$ gives $t+1$ degrees of freedom. Note that $b(x)$ is completely determined by $a(x)$ and $\alpha(x)$ so it does not contribute to any degree of freedom. The dimension of stratum $C_{m_1, \dots, m_s}$ therefore is $t+1+s$. 

\textit{Special case}: $\ceil*{\frac{m_1}{\beta}} + \dots + \ceil*{\frac{m_s}{\beta}} > n-1$. As discussed in Lemma \ref{lemmaSC}, in the case of $\beta \geq 2$, we need to have $s=n$ and all the $m_i = 1$, which gives us the stratum $C_{m_1=1, \dots, m_n=1}$.

Recall from Proposition \ref{propstrata} that in the special case, $b(x)=0$, so only $a(x)$ contributes to degrees of freedom, which are given by the variables $x_1, \ldots, x_n$. So the total degree of freedom is $n$. Therefore the dimension of $C_{1, \dots, 1}$ is $n$.

We can also have $\beta =1$. In this case, no matter which stratum we look at, the ideals $I = (a(x), y-b(x))$ in it must satisfy $b(x)=0$ by Proposition \ref{propstrata}. All the degrees of freedom are given by $a(x)$, so the dimension of $C_{m_1, \dots, m_s}$ is $s$.
\end{proof}

\bigskip
\bigskip

\begin{lemma}
\label{lemmairrr}
All the strata $C_{m_1, \dots, m_s}$ are irreducible.
\end{lemma}

\begin{proof}
By Proposition \ref{propstrata} and Lemma \ref{lemmadim}, a stratum $C_{m_1, \dots, m_s}$ is isomorphic to 

{\small $(\mathbb{C}^{t+s+1} -\{(x_1, \dots, x_s, \alpha_0, \dots, \alpha_t) |x_i = x_j$ for some $1 \leq i, j \leq s\}) / Stab(\{m_1, \dots, m_s\})$}. We remove the set of all ideals where some $x_i =x_j$ because the roots should be distinct by construction. We mod out by the action of the stabilizer of the multiplicities to eliminate the over-counting of swapping $x_i$ and $x_j$ when $m_i =m_j$. An affine space with a closed subvariety removed is irreducible, and the quotient by action of a finite group again keeps the space irreducible.
\end{proof}

Now we can conclude that all the closures $\overline{C_{m_1, \dots, m_s}}$ are irreducible.

\bigskip
\bigskip

We conclude all the previous results and classify all the irreducible components of $\Hilb^n(\{y^\beta =0\})$ as the following theorem.

\begin{theorem}
\label{thmZ}
All the irreducible components of $\Hilb^n(\{y^\beta =0\})$ have dimension $n$ and are closures of the strata $C_{m_1, \dots, m_s}$ where $1 \leq m_i \leq \beta$ for all $i$. Given $m_1, \dots, m_s \leq \beta$, the generic point of this component $\overline{C_{m_1, \dots, m_s}}$ consists of $s$ distinct points on $\{y=0\}$ with multiplicities $m_1, \dots, m_s$.
\end{theorem}

\begin{proof}
We remind the reader that in our previous Lemma \ref{lem_dim_geq_n}, we show that the dimensions of the irreducible components are at least $n$. Now we need to find the strata whose dimensions are $n$ or more, and their closures are candidates of the irreducible components.

We first discuss the special case. 

When $\beta =1$, a stratum $C_{m_1, \dots, m_s}$ has dimension $n$ only when $s=n$, so $m_i$ must all be $1$. Therefore, $\Hilb^n(\{y =0\})$ has only one irreducible component $\overline {C_{1, \dots, 1}}$ of dimension $n$. This recovers the previous results of \cite{AIK} and \cite{AS} that $\Hilb^n(\{y =0\})$ is irreducible of dimension $n$.

When $\beta \geq 2$, the stratum $C_{1, \dots, 1}$ has dimension $n$ and it is not a subset of the closure of any other strata. So its closure is an irreducible component of $\Hilb^n(\{y^\beta =0\})$.

Now we look at the general case and want to find the conditions on $t$ and $s$ such that $\Dim(C_{m_1, \dots, m_s}) = t+1+s \geq n$. Recall that we use $t$ to denote the maximum degree that $\alpha(x)$ can have, and $s$ the number of distinct roots of $a(x)$.

Here are all the equations relating the dimension of a stratum $C_{m_1, \dots, m_s}$, $t, s$ and $n$:

$$\dim(C_{m_1, \dots, m_s}) = t+1+s. \text{ (Lemma \ref{lemmadim})}$$

\medskip
Because $b(x)$ has degree at most $n-1$, we have $$t+\ceil*{\frac{m_1}{\beta}}+\ldots + \ceil*{\frac{m_s}{\beta}}
=n-1.$$

Because $m_i \geq 1$, we must have $\ceil*{\frac{m_i}{\beta}} \geq 1$. So $$\ceil*{\frac{m_1}{\beta}}+\ceil*{\frac{m_2}{\beta}}+\ldots + \ceil*{\frac{m_s}{\beta}} \geq s.$$ 

This implies $$\dim(C_{m_1, \dots, m_s})= 1+s+\Bigl( n-1-\Bigl( \ceil*{\frac{m_1}{\beta}}+\ldots + \ceil*{\frac{m_s}{\beta}}\Bigr) \Bigr)$$

$$=n +s - \Bigl( \ceil*{\frac{m_1}{\beta}}+\ldots + \ceil*{\frac{m_s}{\beta}}\Bigr) \leq n. \ \ \ (***)$$ In particular, this implies that the closure of a stratum $\overline{C_{m_1, \dots, m_s}}$ of dimension $n$ are exactly the irreducible components, because there are no other higher dimensional strata.

The equality of the equation $(***)$ holds when $\ceil*{\frac{m_1}{\beta}}+\ldots + \ceil*{\frac{m_s}{\beta}} = s$, which is equivalent to $\ceil*{\frac{m_i}{\beta}}=1$, and this happens precisely when $1 \leq m_i \leq \beta$ for all $i$.

In conclusion, all the strata have dimension $\leq n$, and the closure of a stratum $C_{m_1, \dots, m_s}$ is an irreducible component if and only if $1 \leq m_i \leq \beta$.

\end{proof}

\bigskip
\bigskip

\section{Generalization: studying components of $\Hilb^n(C)$}

Now, we generalize our results to $\Hilb^n(C)$, where $C$ is any non-reduced plane curve.

We remind the reader that we have defined the stratification in Definition \ref{defstraa}. To briefly restate the definition, the stratum $\Sigma_{m_1^1, \dots, m_i^j, \dots, m_{s_r}^r}$ is the set of all ideals $I$ such that the of multiplicity is exactly $m_i^j$ for a point $x_i^j$ on the smooth part of underlying reduced curve $C^{red}_j$ of $C_j$. 

The stratum $M$ is the collection of all ideals whose images contain some singularities of $C$.

\begin{center}

\begin{tikzpicture}[scale=0.7]
 \begin{axis}[
            xmin=-1.7, xmax=6.2, 
            ymin=-3, ymax=5,
            axis x line = center, 
            axis y line = center,
            xtick = \empty,
            ytick = \empty]
            \addplot[line width=1pt, samples=100,black!40!white, domain=-1:1.75] {(x-0.5)*sqrt((x+1)/(2-x))} node [anchor=west] {};
						\addplot[line width=1pt, samples=100,black!40!white, domain=-1:1.9] {-(x-0.5)*sqrt((x+1)/(2-x))};
						\addplot[line width=1pt, samples=100,black!60!white, domain=-0.7:3.6] {-(x-1.5)^2+2}node [anchor=east] {$C^{red}_j$};
						\addplot[line width=1pt, samples=100, domain=0.1:4.8] {1/x}node [anchor=south] {};

\addplot [only marks,samples at={1.6},black!40!white] {(x-0.5)*sqrt((x+1)/(2-x))} node[anchor=east] {};
\addplot [only marks,samples at={1},black!40!white] {-(x-0.5)*sqrt((x+1)/(2-x))} node[anchor=west] {};
\addplot [only marks,samples at={-0.7},black!40!white] {(x-0.5)*sqrt((x+1)/(2-x))} node[anchor=south] {};

\addplot [only marks,samples at={-0.5},black!60!white] {-(x-1.5)^2+2} node[anchor=east] {};
\addplot [only marks,samples at={2},black!60!white] {-(x-1.5)^2+2} node[anchor=west] {\small $x_i^j$ with multiplicity $m_i^j$};

\addplot [only marks,samples at={3.5}] {1/x} node[anchor=north] {};
\addplot [only marks,samples at={1.9}] {1/x} node[anchor=east] {};
\addplot [only marks,samples at={0.3}] {1/x} node[anchor=east] {};
\end{axis}
\end{tikzpicture}

\end{center}

\bigskip
\bigskip
Similar to the approach to the easier problem, we also want to show that the closure of each stratum $\overline{\Sigma_{m_1^1, \dots, m_i^j, \ldots, m_{s_r}^r}}$ is irreducible. In order to do this, we embed each stratum $\Sigma_{m_1^1, \dots, m_i^j, \ldots, m_{s_r}^r}$ into another irreducible stratum of a bigger space, and show that they have the same closure.
\begin{definition}
Consider the space $\Hilb^n(\mathbb{C}^2, C^{red,sm})$, the Hilbert scheme of points on $\mathbb{C}^2$ supported on the smooth subset of the reduced curve $C^{red,sm}$. We similarly stratify it using the preimage of the Hilbert-Chow map. Denote each stratum by $L_{m_1^1, \dots, m_i^j, \dots, m_{s_r}^r}$. Define $L_{m_1^1, \dots, m_i^j, \dots, m_{s_r}^r} \coloneqq \pi^{-1}( \sum_{i,j}m_i^jx_i^j)$ where each point $x_i^j$ of multiplicity $m_i^j$ is on the open subset $C_j^{red, sm}$ of the smooth points of the reduced curve $C_j^{red}$. 
\end{definition}

It's an easy check that $\Sigma_{m_1^1, \dots, m_i^j, \ldots, m_{s_r}^r} \subset L_{m_1^1, \dots, m_i^j, \ldots, m_{s_r}^r}$ directly from their definitions. However, the condition of being an ideal in the stratum $\Sigma_{m_1^1, \dots, m_i^j, \ldots, m_{s_r}^r}$, namely containing the equation $f(x,y)$ of the curve $C$, is stronger than the condition of being an ideal in the stratum $L_{m_1^1, \dots, m_i^j, \ldots, m_{s_r}^r}$, namely vanishing at points on the curve $C$. So the set containment $\Sigma_{m_1^1, \dots, m_i^j, \ldots, m_{s_r}^r} \subset L_{m_1^1, \dots, m_i^j, \ldots, m_{s_r}^r}$ is proper. 

Now we continue with the irreducibility argument.

\begin{lemma}
\label{sigmairreducible}
Each stratum $L_{m_1^1, \dots, m_i^j, \ldots, m_{s_r}^r}$ is irreducible of dimension $n$.
\end{lemma}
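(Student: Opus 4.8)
The plan is to reduce the irreducibility of each stratum on the smooth part of $C$ to the local computation already carried out for $Z_\beta$ in the chart $U_{(n)}$. The key observation is that the smooth part of each reduced curve $C_j^{sm}$ looks \'etale-locally (or analytically-locally) like the line $\{y=0\}$, and the non-reduced structure $f_j^{\beta_j}$ locally looks like $\{y^{\beta_j}=0\}$. Thus, near a smooth point $x_i^j$ of $C_j$, the ideals parametrized by the stratum are, after a local analytic change of coordinates sending $C_j^{sm}$ to $\{y=0\}$, precisely the ideals we analyzed in Proposition \ref{propstrata}: generated by $(a(x), y-b(x))$ with $a(x)=(x-x_i^j)^{m_i^j}$ and $b(x)$ a multiple of $(x-x_i^j)^{\lceil m_i^j/\beta_j\rceil}$.

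First I would factor the stratum as a product over the individual marked points. Since all points $x_i^j$ lie on the smooth loci of the various $C_j$ and are by hypothesis distinct, they have disjoint open neighborhoods; an ideal whose Hilbert--Chow image is supported at these points splits as a direct sum of its localizations at each point. This gives an isomorphism
\begin{equation*}
\Sigma_{m_1^1,\dots,m_{s_r}^r}\;\cong\;\prod_{j=1}^{r}\prod_{i=1}^{s_j}\Sigma^{\mathrm{loc}}_{m_i^j}(x_i^j),
\end{equation*}
where each local factor is the space of length-$m_i^j$ ideals supported at $x_i^j$ and containing the local equation of $f_j^{\beta_j}$. Because a finite product of irreducible varieties is irreducible, it suffices to prove each local factor is irreducible.

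Next I would identify each local factor with a piece of the $Z_\beta$ analysis. Near a smooth point of $C_j$ we may choose local coordinates in which $f_j=y$ up to a unit, so the local condition $f_j^{\beta_j}\in I$ becomes $y^{\beta_j}\in I$; the length-$m_i^j$ ideals supported at a single point then form exactly a single stratum $C_{m_i^j}$ (with $s=1$) of $Z_{\beta_j}$ in the chart $U_{(n)}$, after absorbing the $x$-translation that records the point $x_i^j$. We already proved in the lemma preceding Theorem \ref{thmZ} that every stratum $C_{m_1,\dots,m_s}$ is irreducible, being an affine space with a closed subvariety removed, modulo a finite group; with $s=1$ no diagonal is removed and no symmetrization is needed, so the single-point local factor is simply an affine space and is irreducible. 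Varying the point $x_i^j$ along the irreducible curve $C_j^{sm}$ keeps the total space irreducible (it fibers over an irreducible base with irreducible fibers of constant dimension).

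The main obstacle I expect is making the ``locally looks like $\{y^{\beta_j}=0\}$'' step rigorous: one must justify that an analytic (or \'etale-local) change of coordinates genuinely carries the local Hilbert-scheme stratum for $f_j^{\beta_j}$ to the stratum for $y^{\beta_j}$, and that this identification is compatible with the algebraic (Zariski) structure well enough to transport irreducibility. The cleanest way to handle this is to note that irreducibility is preserved under the completion/localization used, and that the stratum is constructed as a family over the configuration of distinct points; the substantive content is entirely the single-point computation from the $Z_\beta$ section, and the global statement follows by the product and fibration arguments above.
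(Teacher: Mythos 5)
Your global skeleton (split the ideal over the distinct supporting points, fibre the result over the configuration space of points on the irreducible curves $C_j^{sm}$, and quotient by the finite stabilizer) is exactly the paper's, but you have proved irreducibility of the wrong space, and the local step you substitute for the paper's has a gap of its own. The lemma concerns the stratum $\Sigma_{m_1^1,\dots,m_{s_r}^r}$ of the \emph{ambient} space $\Hilb^n(\cup_j C_j^{sm})$, whose definition imposes no condition $f\in I$: it is the full Hilbert--Chow preimage of configurations of points on the smooth loci with the prescribed multiplicities. Consequently each local factor is the entire punctual Hilbert scheme $\Hilb^{m_i^j}(\mathbb{C}^2,0)$, which is irreducible by Brian\c{c}on (Theorem \ref{thmdimphilb}); that is essentially the whole content of the paper's proof, and no reduction to $Z_{\beta_j}$ or local change of coordinates is needed. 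By inserting ``containing the local equation of $f_j^{\beta_j}$'' into your local factors you are instead treating $\Sigma_{m_1^1,\dots,m_{s_r}^r}\cap\Hilb^n(C)$, which the paper deliberately keeps distinct: comparing the two (equality of closures when $m_i^j\le\beta_j$) is the content of the later Lemma \ref{lemma666}, which relies on the present lemma as an input about the bigger stratum.

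Even for your modified statement, the identification of the local factor with the stratum $C_{m_i^j}$ of $Z_{\beta_j}$ is not correct. By definition $C_{m_1,\dots,m_s}=\Sigma_{m_1,\dots,m_s}\cap Z_\beta\cap U_{(n)}$ is confined to the chart $U_{(n)}$, whereas the set of all colength-$m$ ideals supported at a single smooth point and containing $y^{\beta}$ is not: for $m=3$, $\beta=2$ the ideal $(x^2,xy,y^2)$ contains $y^2$ and is supported at the origin, but does not lie in $U_{(3)}$ since $x^2\in I$ and $\{1,x,x^2\}$ fails to be a basis. So your local factor strictly contains $C_{m}$, and its irreducibility does not follow from the lemma you cite; the paper never proves that the punctual part of $Z_\beta$ is irreducible, it only controls the complement of $U_{(n)}$ by dimension estimates. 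For the lemma as actually stated none of this machinery is required: drop the condition $f_j^{\beta_j}\in I$ from the local factors and quote Brian\c{c}on's theorem, and the rest of your product-and-fibration argument goes through as in the paper.
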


\begin{proof}
Each stratum $L_{m_1^1, \dots, m_i^j, \ldots, m_{s_r}^r}$ is isomorphic to 
\begin{multline*}
\scriptstyle
\Bigl(\prod_{1 \leq i \leq s_j, 1 \leq j \leq r} \limits \Hilb^{m_i^j}(\mathbb{C}^2, 0) \times 
\bigl( \prod_{j=1 \dots r}\limits (C^{sm}_j)^{ s_j} -\{(x_1^1, \dots, x_{s_r}^r)| x_a^j=x_b^j \text{ for some } 1 \leq a,b \leq s_j \} \bigr)\Bigr)\\
/ \scriptstyle \prod_{j=1 \dots r} \limits Stab(m_1^j, \dots, m_{s_j}^j).
\end{multline*}

The preimage of a point with multiplicity $m_i^j$ under the Hilbert-Chow map is isomorphic to $\Hilb^{m_i^j}(\mathbb{C}^2, 0)$. Because the points in the image of $L_{m_1^1, \dots, m_i^j, \ldots, m_{s_r}^r}$ can land anywhere on $C^{red,sm}_j$, as long as they don't collide, we multiply by the factor $\bigl( \prod_{j=1 \dots r}\limits (C^{sm}_j)^{ s_j} -\{(x_1^1, \dots, x_{s_r}^r)| x_a^j=x_b^j \text{ for some } 1 \leq a,b \leq s_j \} \bigr)$. We also need to mod out by the stabilizer of the multiplicities to account for the over-counting when $m_i^j = m_{i'}^j$, and $x_i^j$ and $x_{i'}^j$ are interchanged.

The curve $C^{red}_j$ is irreducible, and its points of singularities form a closed set, so the smooth part of the curve $C^{red,sm}_j$ is irreducible. Removing a closed set from the product of all $C^{red,sm}_j$ leaves the product irreducible. By Theorem \ref{thmdimphilb}, the punctual Hilbert scheme is irreducible, so the product of all these factors is also irreducible. This irreducible product taking the quotient by a finite group is again irreducible.

Now, we want to show that $L_{m_1^1, \dots, m_i^j, \ldots, m_{s_r}^r}$ has dimension $n$. By Theorem \ref{thmdimphilb}, the product of the punctual Hilbert schemes has dimension $m_1^1-1+ \ldots + m_{s_r}^r -1 = m_1^1 + \ldots + m_{s_r}^r -r$. The product of $r$ planar curves with some closed subsets removed gives $r$ more degrees of freedom. Quotienting out by the action of a finite group does not change the dimension. So $L_{m_1^1, \dots, m_i^j, \ldots, m_{s_r}^r}$ has dimension $n-r+r=n$.

\end{proof}

\begin{lemma}
\label{lemma666}
When $1 \leq m_i^j \leq \beta$ for all $i,j$, the closures of the two types of strata are the same: $\overline{\Sigma_{m_1^1, \dots, m_i^j, \ldots, m_{s_r}^r}} = \overline{L_{m_1^1, \dots, m_i^j, \ldots, m_{s_r}^r}}$. Therefore $\overline{\Sigma_{m_1^1, \dots, m_i^j, \ldots, m_{s_r}^r}}$ are irreducible and have dimension $n$. 
\end{lemma}

\begin{proof}
For the proof we use the following fact: 
If $Y$ is a closed subset of an irreducible finite-dimensional topological space $X$, and if $\dim Y= \dim X$, then $Y=X$. Here, we want $Y= \overline{\Sigma_{m_1^1, \dots, m_i^j, \ldots, m_{s_r}^r}}$ and $X=\overline{ L_{m_1^1, \dots, m_i^j, \ldots, m_{s_r}^r}}$, and we want to show that they satisfy the conditions on $X$ and $Y$. 

Assume $m_i^j \leq \beta$. We know from Lemma \ref{sigmairreducible} that the closures of the strata $L_{m_1^1, \dots, m_i^j, \ldots, m_{s_r}^r}$ are closed, irreducible, and have dimension $n$.

A collection of points moving along the smooth part of $C$ are locally the same as the points moving along $\{y^\beta_j = 0\}$, because the local ring at any point on $\{y^\beta_j = 0\}$ is isomorphic to the local ring at any smooth point on $\{f_j^{\beta_j}(x,y)=0 \}$ by locally changing coordinates between $y$ and $f_j(x,y)$. Therefore the dimension of the stratum $\Sigma_{m_1^1, \dots, m_i^j, \ldots, m_{s_r}^r}$ is the sum of the dimension of each stratum $\Sigma_{m_1^j, \ldots, m_{s_j}^j}$ of $\Hilb^n(\{y^\beta =0\})$. When $1 \leq m_i^j \leq \beta_j$, the dimension of each stratum $\Sigma_{m_1^j, \ldots, m_{s_j}^j}$ is $m_1^j+ \ldots+ m_{s_j}^j$ by Theorem \ref{thmZ}. So the dimension of the stratum $\Sigma_{m_1^1, \dots, m_i^j, \ldots, m_{s_r}^r}$ is $m_1^1+ \dots+ m_i^j+ \ldots+ m_{s_r}^r=n$.

So the closures of strata $\overline{\Sigma_{m_1^1, \dots, m_i^j, \ldots, m_{s_r}^r}}$ also have dimension $n$. So the closures of the two types of strata $Y= \overline{\Sigma_{m_1^1, \dots, m_i^j, \ldots, m_{s_r}^r}}$ and $X=\overline{ L_{m_1^1, \dots, m_i^j, \ldots, m_{s_r}^r}}$ satisfy the conditions of being $X$ and $Y$, and therefore they are equal. So $\overline{\Sigma_{m_1^1, \dots, m_i^j, \ldots, m_{s_r}^r}}$ is irreducible of dimension $n$.

\end{proof}

\bigskip
\bigskip

Finally, we have the theorem that classifies the irreducible components of $\Hilb^n(C)$. The reader might notice that we have not discussed if the stratum $M$ is irreducible or not. As it turns out in the proof of the following theorem, $\overline{M}$ is never an irreducible component because its dimension is too small.
\begin{theorem}
The irreducible components of $\Hilb^n(C)$ are the closures of the strata $\overline {\Sigma_{m_1^1, \dots, m_i^j, \ldots, m_{s_r}^r}}$ where $1 \leq m_i^j \leq \beta_j$.
\end{theorem}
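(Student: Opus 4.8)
The plan is to realize $\Hilb^n(C)$ as a finite union of locally closed strata indexed by the combinatorial type of the Hilbert--Chow image, and then to single out those strata whose closures have dimension exactly $n$. By Lemma \ref{lemmacontain}, every ideal $I \in \Hilb^n(C)$ maps under $\pi$ to a collection of fat points supported on $\cup_j C_j$. I would stratify according to (i) which support points lie on the smooth loci $C_j^{sm}$ versus the finitely many singular and intersection points of $\cup_j C_j$, and (ii) the multiplicity of each support point. For fixed $n$ there are only finitely many such types, so $\Hilb^n(C)$ is a finite union of the corresponding strata closures. The strata with all support on smooth parts and all multiplicities satisfying $1 \le m_i^j \le \beta_j$ are exactly the ones covered by Lemma \ref{lemma666}, whose closures I will show are precisely the irreducible components.

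First I would establish that every stratum has dimension at most $n$, so that $\dim \Hilb^n(C) \le n$ and hence, by Lemma \ref{lem_dim_geq_n}, every irreducible component has dimension exactly $n$. For the \emph{good} strata this is Lemma \ref{lemma666}, which gives irreducibility and dimension $n$ for their closures. Combined with the elementary fact used in the proof of Lemma \ref{lemma666} (a closed irreducible subset of equal dimension inside an irreducible variety is the whole space), any component containing a good-stratum closure must equal it; thus each good-stratum closure is an irreducible component, and distinct multiplicity types give distinct components, since the generic point of each stratum lies in the open stratum and records its own Hilbert--Chow type.

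It remains to bound the dimension of the \emph{bad} strata, and this is the main obstacle. There are two cases. If all support points lie on smooth parts but some multiplicity exceeds $\beta_j$, then locally the curve is $\{y^{\beta_j}=0\}$ and the relevant local factor is one of the strata analyzed in Theorem \ref{thmZ}; the dimension formula there gives $\dim C_{m_1,\dots,m_s} = n_{\mathrm{loc}} + s - (\lceil m_1/\beta\rceil + \dots + \lceil m_s/\beta\rceil) < n_{\mathrm{loc}}$ whenever some $m_i > \beta$, so the global stratum has dimension $< n$. If instead some support point $p$ is a singular or intersection point of $\cup_j C_j$, the key observation is that $p$ is pinned: the singular locus is finite and $0$-dimensional, so $p$ contributes no positional moduli. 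The ideals supported at $p$ of length $m_p$ form a closed subvariety of the punctual Hilbert scheme, which has dimension $m_p - 1$ by Theorem \ref{thmdimphilb}; hence $p$ contributes at most $m_p - 1 < m_p$. Since every collection of smooth support points of total length $\ell$ contributes at most $\ell$ (again by the bound $\dim C_{m_1,\dots,m_s} \le \sum_i m_i$ from Theorem \ref{thmZ}, assembled over branches as in Lemma \ref{lemma666}), any stratum with a support point on the singular locus has dimension at most $(n - m_p) + (m_p - 1) = n - 1 < n$.

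Finally I would assemble the pieces. Since $\Hilb^n(C)$ is a finite union of strata closures and each irreducible component is irreducible, every component is contained in a single stratum closure. The bad-stratum closures have dimension $< n$ and so cannot contain a component, which has dimension $\ge n$; hence every component lies in a good-stratum closure $\overline{\Sigma_{m_1^1, \dots, m_{s_r}^r}}$ with $1 \le m_i^j \le \beta_j$. Being an $n$-dimensional irreducible closed subset of an $n$-dimensional irreducible closure, it must equal that closure. This identifies the irreducible components exactly with the closures $\overline{\Sigma_{m_1^1, \dots, m_{s_r}^r}}$ for $1 \le m_i^j \le \beta_j$, as claimed. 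The delicate point throughout is the local dimension bound at the singular and intersection points; once it is secured, the reduction to the smooth local model $Z_{\beta_j}$ and the matching of dimensions do the rest.
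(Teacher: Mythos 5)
Your proposal is correct and follows essentially the same route as the paper: stratify by the Hilbert--Chow image type, use Lemma \ref{lemma666} for the strata supported on smooth loci with $1 \le m_i^j \le \beta_j$, bound the remaining strata below dimension $n$ via the local model $Z_{\beta_j}$ and the punctual Hilbert scheme at pinned singular or intersection points, and conclude with Lemma \ref{lem_dim_geq_n}. Your write-up is somewhat more explicit than the paper's about the final assembly step (that every component, being irreducible of dimension $\ge n$, must coincide with one of the finitely many good-stratum closures), but the underlying argument is the same.
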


\begin{proof}
We first remind the reader of Lemma \ref{lem_dim_geq_n} that we proved in section \ref{stra}: the irreducible components all have dimension $n$ or more.

We look at the two types of strata separately. 
Case 1: The points in the image are all contained in $C^{sm}_j$. The strata are $\Sigma_{m_1^1, \dots, m_i^j, \ldots, m_{s_r}^r}$. 

When $m_i^j > \beta_j$, such strata have dimension strictly less than $n$ by Theorem \ref{thmZ} and a similar argument of locally changing coordinates between $y$ and $f_j(x,y)$ as in the proof of Lemma \ref{lemma666}, and their closures are not the irreducible components. When $1 \leq m_i^j \leq \beta_j$, we have argued in Lemma \ref{lemma666} that such strata are irreducible and have dimension $n$. So their closures must be the irreducible components.

Case 2:  Some of the points in the image $\pi(I)$ are singularities or the intersection points of the curves $C$. We have one stratum $M$ of such ideals. 

The preimage of $s$ points with multiplicities $m_i^j$ on $C_j$ is a subset of the product of the punctual Hilbert scheme $\Hilb^{m_1^1}(\mathbb{C}^2, 0) \times \dots \times \Hilb^{m_{s_r}^r}(\mathbb{C}^2, 0)$. By Theorem \ref{thmdimphilb}, the product of the punctual Hilbert schemes has dimension $m_1^1-1+ \ldots + m_{s_r}^r -1 = m_1^1 + \ldots + m_{s_r}^r -r$. When we allow the points on the smooth part to move, the points at the singularities or the intersections do not move. So the degree of freedom added by moving the points are strictly less than $r$. So the preimage of a collection of points containing some singularity has dimension strictly less than $n$. $M$ is contained in such preimage so has dimension strictly less than $n$ and cannot be irreducible components by Lemma \ref{lem_dim_geq_n}.

\end{proof}

\section{computation of multiplicities of components}

We intersect each stratum $\Sigma_\mu$ with $U_{(n)}$ and the intersection is an open dense subset of each stratum. We study the multiplicities of points in this open dense subset of the intersection.

\bigskip
\bigskip

\subsection{The stratum $\Sigma_{(n)}$ of $\Hilb^n(\{y^\beta =0\})$}
\label{secn}
\
\\

We begin by studying the stratum $\Sigma_{(n)}$ corresponding to the $1$-part partition $(n)$ of $n$. Because we want the closure of this stratum to be an irreducible component, we assume $n \leq \beta$ in this subsection.

We want to study the stratum $\Sigma_{(n)}$ inside the chart $U_{(n)}$, so we pick an ideal $I \in U_{(n)}$, and $I$ is necessarily generated as $I = (a(x), y-b(x))$, where $a(x) = x^n +a_{n-1} x^{n-1} + \dots a_0$ and $b(x) = b_{n-1} x^{n-1} + \dots + b_0$. We also want that $I \in \Sigma_n$, so $a(x) = (x-x_1)^n$ for some $x_1 \in \mathbb{C}$ and $b^\beta(x) =0$ mod $a(x)$. Those are all the conditions we have to consider to compute the coordinate ring of $\Sigma_{(n)}$.




\begin{remark}
We notice that $b^\beta(x) =0$ mod $(x-x_1)^n$ is equivalent to $b^\beta(x+x_1) =0$ mod $x^n$. So we expand the polynomial $b^\beta(x+x_1)$ and set each polynomial coefficient in variables $b_0, \dots, b_{n-1}$ of the term $x^i$ to be $0$ for $0 \leq i \leq n-1$.
\end{remark}

We define the coefficients of $b(x+x_1)$ first before taking its $\beta$-th power. 
 
\begin{definition}

We define the coefficients $B_i$ of $b(x+x_1)=b_0 +b_1(x+x_1)+b_2(x+x_1)^2 + \dots + b_{n-1}(x+x_1)^{n-1}:=  B_0 + B_1x + B_2x^2 + \dots + B_{n-1}x^{n-1}.$

Each $B_i$ is a polynomial of variables $b_i, \dots, b_{n-1}$ and $x_i$: For $0 \leq i \leq n-1$, $$B_i := b_i + {i+1 \choose i} b_{i+1}x_1 + {i+2 \choose i} b_{i+2}x_1^2 + \dots + {n-1 \choose i} b_{n-1}x_1^{n-i-1}.$$ 
\end{definition}

\bigskip

Now we take the $\beta$-th power of $b(x+x_1)$ and find the coefficients of $x^i$ in terms of $B_i$.

\begin{definition}
\label{defEi}
Define $E_i$ as a function of $B_i$'s. Set $$E_i := \sum_{k_0, \dots, k_{n-1}} \limits \binom{\beta}{k_0, \dots, k_{n-1}}B_0^{k_0} \dots B_{n-1}^{k_{n-1}},$$ where $k_0,\dots, k_{n-1}$ satisfy $0 \cdot k_0 + \dots + (n-1) \cdot k_{n-1} = i$ and $k_0 + \dots + k_{n-1} = \beta$.
\end{definition}

\begin{lemma}
The function $b^\beta(x+x_1)$ can be written as  $b^\beta(x+x_1) = \sum_{i =0, \dots, n-1} \limits E_i x^i$.
\end{lemma}

\begin{proof}
By the multinomial theorem,

\begin{flalign*}
b^\beta(x+x_1)
& = (B_0 + B_1x + B_2x^2 + \dots + B_{n-1}x^{n-1})^ \beta \\
& = \sum_{k_0+ \dots+ k_{n-1}=\beta}\limits \binom{\beta}{k_0, \dots, k_{n-1}}B_0^{k_0}( B_1x)^{k_1}\dots (B_{n-1}x^{n-1})^{k_{n-1}}  \\
&= \sum_{k_0+ \dots+ k_{n-1} = \beta}\limits \binom{\beta}{k_0, \dots, k_{n-1}}(B_0^{k_0} B_1^{k_1}\dots B_{n-1}^{k_{n-1}})x^{(0 \cdot k_0 + 1\cdot k_1 + \dots (n-1)\cdot k_{n-1})}. \\
\end{flalign*}

We denote the power of $x$ as $i$, and therefore for each term $x^i$, we have $i = 0 \cdot k_0 + 1\cdot k_1 + \dots (n-1)\cdot k_{n-1}$ and $k_0+ \dots+ k_{n-1} = \beta$. The coefficient of $x^i$ is $E_i$.
\end{proof}

\begin{corollary}
\label{lemieq}

From the computation above, $b^\beta(x+x_1)$ mod $x^n =0$ if and only if $E_i=0$ for $0 \leq i \leq n-1$.

\end{corollary}

\begin{corollary}
Denote the coordinate ring of the component $\Sigma_{(n)}$ as $R_{(n)}$, then $R_{(n)}$ is isomorphic to $$ R_{(n)} := \mathbb{C}[b_0, \dots, b_{n-1}, x_1]/(E_0, \dots, E_{n-1}).$$
\end{corollary}

We also make the following observations about the functions $E_i$.

\begin{lemma}
\label{divisiblelem}

For $0 \leq i \leq n-1$, the function $E_i$ is divisible by $B_0^{\beta -i}$ but not divisible by $B_0^{\beta -i+1}$

\end{lemma}

\begin{proof}
Let's look at a term $\binom{\beta}{k_0, \dots, k_{n-1}}B_0^{k_0} \dots B_{n-1}^{k_{n-1}}$ in $E_i$. In order that $k_0,\dots, k_{n-1}$ satisfy $0 \cdot k_0 + \dots + (n-1) \cdot k_{n-1} = i$ and $k_0 + \dots + k_{n-1} = \beta$ for $i \leq n-1 < \beta$, we must have $k_0 \geq \beta -i$. 

When $i=0$, we have $E_0 = B_0^\beta$ and therefore $E_0$ is not divisible by $B_0^{\beta+1}$.

For every $i$ such that $1 \leq i \leq n-1$, the term $\binom{\beta}{k_0,k_1}B_0^{k_0}B_1^{k_1}$ where $k_0 + k_1= \beta$ and $k_1=i$ is in $E_i$. This term is not divisible by $B_0^{\beta -i+1}$. Each term of $E_i$ is a positive constant times a monomial of $B_0, \dots, B_{n-1}$ and each monomial is different, so if one monomial term is not divisible by $B_0^{\beta -i+1}$, the entire function $E_i$ is not divisible by $B_0^{\beta -i+1}$.
\end{proof}

\begin{corollary}
Given $n \leq \beta$, $E_i$ must be divisible by $B_0$. So $B_0=0$ implies $E_i =0$ for all $i$.
\end{corollary}

\begin{proof}
When $n \leq \beta$, we have that $i \leq n-1 \leq \beta -1$. So $\beta -i \geq 1$, and we must have that $E_i$ is divisible by $B_0$.
\end{proof}

\begin{definition}
Define $\Sigma_{(n)}^{red}$ as the reduced variety corresponding to $\Sigma_{(n)}$.
\end{definition}

\begin{lemma}
The reduced variety $\Sigma_{(n)}^{red}$ is cut out in $U_{(n)}$ by the equations $$B_0= b(x_1)= b_0 +b_1x_1+b_2x_1^2 + \dots + b_{n-1}x_1^{n-1} = 0.$$ and $a(x)= (x-x_1)^n=0.$ In other words, $$\Sigma_{(n)}^{red} :=\{ (b_0, \dots, b_{n-1}, x_1)| b(x_1)= 0,(x-x_1)^n=0 \}.$$

\end{lemma}

\begin{proof}
The equation $E_0 = B_0^\beta =0$ holds true if and only of $B_0=0$. Additionally, because $B_0$ is a factor of all the $E_i$'s, $B_0=0$ implies that all the $E_i$'s are equal to $0$.

\end{proof}

To compute the multiplicity of the component $\Sigma_{(n)}$, we localize at a generic point $p$ of $V$. Note that $p$ is the prime ideal corresponding to $\Sigma_{(n)}^{red}$.

\begin{corollary}
\label{cor_multcount}
The local ring $(R_{(n)})_p$ is isomorphic to $$\mathbb{C}[b_0, \dots, b_{n-1}, x_1]_p/(B_0^\beta, B_0^{\beta -1}, B_0^{\beta -2}, \dots B_0^{\beta -n+1})_p,$$ and it has dimension $\beta -n +1$.

We conclude that the multiplicity of $\Sigma_{(n)}$ is $\beta -n +1$.
\end{corollary}

\begin{proof}
By Lemma \ref{divisiblelem}, we can factor $E_i$ into $E_i = B_0^{\beta -i}F_i$ where $F_i$ is a factor not divisible by $B_0$. Therefore, we can rewrite the local ring $(R_{(n)})_p$ as $$\mathbb{C}[b_0, \dots, b_{n-1}, x_1]_p/(B_0^\beta, \dots, B_0^{\beta -i}, \dots, B_0^{\beta-n+1})_p.$$ Recall that $B_0= b_0 +b_1x_1+b_2x_1^2 + \dots + b_{n-1}x_1^{n-1}$.
So the local ring $(R_{(n)})_p$ has basis $1, B_0, \dots, B_0^{\beta -n}$, and the dimension of the local ring $(R_{(n)})_p$ is $\beta -n +1$.
\end{proof}

\subsection{The multiplicity of a general stratum $\Sigma_{m_1, \dots, m_s}$}
\
\\

Now we consider all the strata $\Sigma_{m_1, \dots, m_s}$ whose closures are the irreducible components of $\Hilb^n(\{y^\beta =0\})$, so $\sum_i m_i =n$ and $m_s \leq \beta$. Recall that every irreducible component intersects $U_{(n)}$ and the intersection is open and dense in  $\Sigma_{m_1, \dots, m_s}$, so we study the multiplicities of the points of each irreducible components in the chart $U_{(n)}$.

The strategy of this section is to show that the coordinate ring of the stratum $\Sigma_{m_1, \dots, m_s}$ in $U_{(n)}$ is isomorphic as a $\mathbb{C}$-vector space to a tensor product of the coordinate rings of the strata $\Sigma_{(m_i)}$ of $\Hilb^{m_i}(\{y^\beta =0\})$, where $1 \leq i \leq n$, which we computed in the last section. Therefore the multiplicity of $\Sigma_{m_1, \dots, m_s}$ is the product of the multiplicities of $\Sigma_{m_i}$.
\\

We define the constructions to show this isomorphism. 




 We first recall our coordinate systems. Denote an ideal in $\Sigma_{m_1, \dots, m_s} \cap U_{(n)}$ as $((x-x_1)^{m_1}\dots(x-x_s)^{m_s}, y-b(x))$ for some $x_1, \dots, x_s \in \mathbb{C}$ satisfying $x_i \neq x_j$ for all $1 \leq i,j \leq s$.

 Denote an ideal in $\Sigma_{(m_i)} \cap U_{(m_i)} \subset \Hilb^{m_i}(\{y^\beta =0\})$, where $(m_i)$ is the one-part partition of the number $m_i$, as $((x-x_i)^{m_i}, y- b^i(x))$ where $b^i(x) = b^i_0 + b^i_1x+ \dots + b^i_{m_i-1}x^{m_i-1}$, and the coordinates are $b^i_0, \dots, b^i_{m_i-1}, x_i$.

\begin{definition}

(1)Work in $U_{(n)} \subset \Hilb^n(\{y^\beta =0\})$. 

Define $r(x) \coloneqq b^\beta(x) $ mod $a(x)$, the remainder of polynomial long division. 

Denote $r(x) \coloneqq r_0 + r_1x + \dots r^{n-1}x$. 
\\

(2)Work in $U_{(m_i)} \subset \Hilb^{m_i}(\{y^\beta =0\})$. 

Define $r^i(x) \coloneqq (b^i(x))^\beta$ mod $(x-x_i)^{m_i}$. 

Denote $r^i(x) = r^i_0 + r^i_1x + \dots r^i_{m_i-1}x^{m_i-1}$. 
\\



\end{definition}

We remark that $b_0, \dots, b_{n-1}$ and $b^i_0, \dots, b^i_{m_i-1}$ are formal variables as generators of coordinate rings of the corresponding strata. But $r_0, \dots, r_{n-1}$ are polynomials of variables $b_0, \dots, b_{n-1}, x_1, \dots, x_s$, and $r^i_0, \dots, r^i_{m_i-1}$ are polynomials of variables $b^i_0, \dots, b^i_{m_i-1}, x_i$.

\begin{lemma}
Immediately following the definitions, the coordinate ring $R_\mu$ of the scheme $\Sigma_{m_1, \dots, m_s}$ in $U_{(n)}$ of $\Hilb^n(\{y^\beta =0\})$ is given by $$R_\mu = \mathbb{C}[b_0, \dots, b_{n-1}, x_1, \dots, x_s]/(r_0, \dots, r_{n-1}).$$ 
\end{lemma}

Recall from the last subsection that the coordinate ring $R_{(m_i)}$ of the scheme $\Sigma_{(m_i)}$ in $U_{(m_i)}$ of $\Hilb^{m_i}(\{y^\beta =0 \})$ is given by $$R_{(m_i)} = \mathbb{C}[b^i_0, \dots, b^i_{m_i-1}, x_i]/(r^i_0, \dots, r^i_{m_i-1}).$$

Let $p$ be a generic point of $R_\mu$, such that $x_i \neq x_j$ for all $i \neq j$. We localize $R_\mu$ at $p$ and compute the $\mathbb{C}$-dimension of the ring $(R_\mu)_p$ as the multiplicity of the stratum $\Sigma_{m_1, \dots, m_s}$.

We now state the proposition below, that allows us to compute the dimension of $(R_\mu)_p$ by the dimension of $(R_{(m_i)})_p$. Recall that the dimension of $(R_{(m_i)})_p$ is $\beta - m_i +1$ as computed in Corollary \ref{cor_multcount}.

\begin{proposition}
\label{mult}
The following two local algebras are isomorphic: $(R_\mu)_p \cong \bigotimes_{i=1, \dots, s} (R_{(m_i)})_p$. Specifically,

$$\frac{\mathbb{C}[x_0, \dots, x_s, b_0, \dots, b_{n-1}]_p}{(r_0, \dots, r_{n-1})_p} \cong \bigotimes_{i=1, \dots, s} \frac{\mathbb{C}[x_i, b^i_0, \dots, b^i_{m_i-1}]_p}{(r^i_0, \dots, r^i_{m_i-1})_p}.$$
\end{proposition}

\begin{definition}
We define the ring homomorphism $\phi$ as follows. $$\phi: \bigotimes_{i=1, \dots, s} \frac{\mathbb{C}[x_i, b^i_0, \dots, b^i_{m_i-1}]_p}{(r^i_0, \dots, r^i_{m_i-1})_p} \rightarrow \frac{\mathbb{C}[x_0, \dots, x_s, b_0, \dots, b_{n-1}]_p}{(r_0, \dots, r_{n-1})_p}.$$

Define $\phi$ to be identity on the variables $x_i$, $\phi(x_i) = x_i$. And define $\phi(b_j^i)$ to be the coefficient of the term $x^j$ in the polynomial long division $b(x)$ mod $(x-x_i)^{m_i}$.

We define the image of $\phi$ on the generators, and extend the map $\phi$ to the entire ring by declaring that $\phi$ is a ring isomorphism. i.e. for any element $f$ in the ring $\bigotimes_{i=1, \dots, s} (R_{(m_i)})_p$, define $$\phi(f(x_1, \dots, x_s, b_0^1, \dots, b^i_j, \dots, b^s_{m_s-1})) = f(x_1, \dots, x_s, \phi(b_0^1), \dots, \phi(b^i_j), \dots, \phi(b^s_{m_s-1})
))$$
\end{definition}

\begin{remark}
We can add a variable $x$ and extend $\phi$ to a ring homomorphism from $\bigotimes_{i=1, \dots, s} (R_{(m_i)})_p[x]$ to $(R_\mu)_p[x]$ by sending $\phi(x)=x$. This homomorphism satisfies that $\phi(b^i(x)) = b(x)$ mod $(x-x_i)^{m_i}$ by construction. Due to the construction that $\phi$ is a ring homomorphism, we also have $\phi((b^i(x)^\beta) = \phi((b^i(x))^\beta =b^\beta(x)$ mod $(x-x_i)^{m_i}$.
\end{remark}

\begin{lemma}
\label{lemr}
We have that $\phi(r^i(x)) = r(x)$ mod $(x-x_i)^{m_i}$.
\end{lemma}

\begin{proof}
Because $(x-x_i)^{m_i}$ is a factor of $a(x)$, we have that 

$\phi(r^i(x)) = r^i(x, x_1, \dots, x_s, \phi(b^i_j)) = (b^i(x,x_1, \dots, x_s, \phi(b^i_j)))^\beta $ mod $(x-x_i)^{m_i} = (b(x))^\beta $ mod $(x-x_i)^{m_i} = r(x)$ mod $(x-x_i)^{m_i}$.
\end{proof}

\begin{proof}[Proof of Proposition \ref{mult}]

We first want to show that the rings 
$\bigotimes_{i=1, \dots, s} \limits \mathbb{C}[x_i, b^i_0, \dots, b^i_{m_i-1}]_p$ and $\mathbb{C}[x_0, \dots, x_s, b_0, \dots, b_{n-1}]_p$ are isomorphic by proving that $\phi$ is a linear change of variables between the ring generators. 

We write $\phi$ as a change-of-basis matrix with polynomial entries of variables $x_i$ that changes basis from the ring generators $b_0^{1}, \dots, b_j^{i}, \dots, b_{m_s-1}^{s}$ to the ring generators $b_0, \dots, b_{n-1}$. By construction, each $b^i_j$ is sent to a linear combination of $b_0, \dots, b_{n-1}$ with polynomial coefficients of variables $x_i$.

Now we show the other direction that each $b_j$ can be written as a linear combination of $b^i_j$ with coefficients being rational functions in $x_1, \dots, x_s$, such that the denominator of each rational function is a product of $x_i-x_l$ where $i \neq l$.

First, we want to show that the determinant of the matrix $\phi$ is a product of $x_i-x_l$ where $i \neq l$. Consider the $x_i$'s not as variables, but fixed numbers in $\mathbb{C}$, and assume all $x_i \neq x_l$ for all $i,l \in \{1, \dots, s\}$. Similarly, consider $b_0, \dots, b_{n-1}$ not as variables but fixed numbers in $\mathbb{C}$. So $b(x), b^i(x), r(x), r^{i}(x)$, $(x-x_1)^{m_1}\dotsc(x-x_s)^{m_s}$ and $(x-x_i)^{m_i}$ are all polynomials with complex coefficients in one variable $x$.

We have that $\frac{\mathbb{C}[x]}{((x-x_1)^{m_1}\dotsc(x-x_s)^{m_s})} \cong \bigotimes_i \frac{\mathbb{C}[x]}{((x-x_i)^{m_i})}$ by the Chinese remainder theorem. We can vary the values of $b_0, \dots, b_{n-1}$ such that $b(x)$ gives an arbitrary element of $\mathbb{C}[x]$. By the Chinese remainder theorem, $\phi$ is an isomorphism for all $x_i \neq x_l$.

Now we let $x_i$ be variables and consider $\det(\phi)$ as a non-constant function in $\mathbb{C}[x_1, \dots, x_s]$. For any set of values of $b_0, \dots, b_{n-1}$ and $x_1, \dots, x_s$ such that $i \neq l$, we have that $\phi$ is an invertible linear map and $\det(\phi) \neq 0$ for any $x_i \neq x_l$. Then $\det(\phi)$ does not vanish on the quasi-affine space defined by equations $\{x_i \neq x_l |\ \forall i \neq l\}$. So $\det(\phi)$ can only have factors that are $x_i - x_l$ for some $i \neq l$.

When we assume that $x_i \neq x_l$, the determinant $\det(\phi)$ never vanishes, and $\phi$ is invertible. By construction, $\phi^{-1}$ maps each $b_j$ to a linear combination of $b^i_j$ with coefficients being rational functions in $x_1, \dots, x_s$, such that the denominator of each rational function is $\det(\phi)$, a product of $x_i-x_l$ where $i \neq l$.

In the localized ring, $x_i \neq x_l$, so we have the ring isomorphism $$\bigotimes_{i=1, \dots, s} \mathbb{C}[x_i, b^i_0, \dots, b^i_{m_i-1}]_p \cong \mathbb{C}[x_0, \dots, x_s, b_0, \dots, b_{n-1}]_p.$$

By Lemma \ref{lemr}, $\phi(r^i_j)$ is the coefficient of the term $x^j$ in the polynomial long division $r(x)$ mod $(x-x_i)^{m_i}$. So by the property of $\phi$ we just showed, $\phi$ linearly changes variables between the collection $r^i_j$ and the collection $r_j$. So the ideals in the corresponding polynomial rings satisfy $(r_0, \dots, r_{m_i-1})_p \cong (r^1_0, \dots, r^i_j, \dots, r^s_{m_s-1})_p$.

The isomorphism of the quotient rings follows from the isomorphisms of the polynomial rings and the ideals we're quotienting them out with.
\end{proof}

\begin{corollary}
The multiplicity at a generic point $p$ of $\Sigma_{m_1, \dots, m_s}$ is $\prod_i (\beta - m_i +1)$.
\end{corollary}

\begin{proof}
The multiplicity of $\Sigma_{m_1, \dots, m_s}$ is the $\mathbb{C}$-dimension of the algebra $\frac{\mathbb{C}[x_0, \dots, x_s, b_0, \dots, b_{n-1}]_p}{(r_0, \dots, r_{n-1})_p}$. By the isomorphism in the theorem above, its dimension is equal to the product of the dimensions of $\frac{\mathbb{C}[x_i, b^i_0, \dots, b^i_{m_i-1}]_p}{(r^{i}_0, \dots, r^{i}_{m_i-1})_p}$, which are $\beta - m_i +1 $ as we computed in the last section.
\end{proof}

\subsection{Generalization: multiplicity of the irreducible components of any curves $C$}

\begin{theorem}
Recall that the irreducible components of $\Hilb^n(C)$ are indexed by partitions $m_1^1, \dots, m_i^j, \dots, m_{s_r}^r$ and each $m_i^j \leq \beta_j$ for all $i$ and $j$. The multiplicity of the component indexed by $m_1^1, \dots, m_i^j, \dots, m_{s_r}^r$ is $\Pi_{i,j}(\beta -m_i^j +1)$.

\end{theorem}

\begin{proof}
Locally, the component $\Sigma_{m_1^1, \dots, m_{s_1}^1, \dots, m_1^r, \dots, m_{s_r}^r}$ have the same multiplicity as the product of the components: $\Sigma_{m_1^1, \dots, m_{s_1}^1} \times \dots \times \Sigma_{m_1^r, \dots, m_{s_r}^r}$, where each $\Sigma_{m_1^j, \dots, m_{s_r}^j}$ is a stratum of $\Hilb^n(\{y^{\beta_j} = 0\})$. Therefore the multiplicity of the component indexed by $m_1^1, \dots, m_{s_1}^1, \dots, m_1^r, \dots, m_{s_r}^r$ is $\Pi_{i,j} (\beta_j - m_i^j+1)$.
\end{proof}

\end{document}